\numberwithin{equation}{section}
\newcommand{\I}{\mathbb I}
\newtheorem{thm}{Theorem}[section]
\newtheorem{pro}[thm]{Proposition}
\newtheorem{lem}[thm]{Lemma}
\newtheorem{cor}[thm]{Corollary}
\begin{document}


\title[$\varkappa$-metrizable compacta and superextensions]
{$\varkappa$-metrizable compacta and superextensions}

\author{Vesko  Valov}
\address{Department of Computer Science and Mathematics, Nipissing University,
100 College Drive, P.O. Box 5002, North Bay, ON, P1B 8L7, Canada}
\email{veskov@nipissingu.ca}
\thanks{Research supported in part by NSERC Grant 261914-08.
The first version of this paper was completed during the author's
stay at the Faculty of Mathematics and Informatics (FMI), Sofia
University, during the period June-July 2009. The author
acknowledges FMI  for the hospitality.}

\keywords{Dugundji spaces, $\varkappa$-metrizable spaces, spaces
with closed` binary normal subbases, superextensions}

\subjclass{Primary 54C65; Secondary 54F65}


\begin{abstract}
A characterization of $\varkappa$-metrizable compacta in terms of
extension of functions and usco retractions into superextensions is
established.
\end{abstract}

\maketitle

\markboth{}{$\varkappa$-metrizable compacta}



\section{Introduction}
In this paper we assume that all topological spaces are compact
Hausdorff and all single-valued maps are continuous.

 Our main result is a characterization of {\em $\varkappa$-metrizable}
compacta (see Theorem 1.1 below) similar to the original definition
of {\em Dugundji spaces} given by Pelczynski \cite{p} (recall that
the class of $\varkappa$-metrizable spaces was introduced by
Shchepin \cite{sc1} and it contains Dugundji spaces). This
characterization is based on a result of Shapiro \cite[Theorem
3]{sh} that a compactum $X$ is $\varkappa$-metrizable if and only if
for every embedding of $X$ in a compactum $Y$ there exists a {\em
monotone extender} $u\colon C_+(X)\to C_+(Y)$, i.e. $u(f)|X=f$ for
each $f\in C_+(X)$ and $f\leq g$ implies $u(f)\leq u(g)$. Here, by
$C(X)$ and $C_+(X)$) we denote all continuous, respectively,
continuous and non-negative functions on $X$. Following \cite{sh},
we say that a map $u\colon C(X)\to C(Y)$ is: (i) {\em monotone};
(ii) {\em homogeneous}; (iii) {\em weakly additive}, if for every
$f,g\in C(X)$ and every real number $k$ we have: (i) $u(f)\leq u(g)$
provided $f\leq g$; (ii) $u(k\cdot f)=k\cdot u(f)$; (iii)
$u(f+k)=u(f)+k$. Properties (i), (ii) and (iii) imply that for every
$f\in C(X)$ and $g\in C_+(X)$ the following hold: $u(1)=1$, $u(0)=0$
and $u(g)\geq 0$.

We also introduce and investigate the covariant functor $S$: $S(X)$
is the space of all monotone, homogeneous and weakly additive maps
$\varphi\colon C(X)\to\mathbb R$ with the pointwise convergence
topology.  It is easily seen that the map $x\to\delta_x$, $x\in X$,
where each $\delta_x$ is defined by $\delta_x(f)=f(x)$, $f\in C(X)$,
is an embedding of $X$ into $S(X)$. It appears that
$\varkappa$-metrizable compacta are exactly $S$-injective compacta
in the sense of Shchepin \cite{sc3}. The functor $S$, being a
subfunctor of the functor $O$ introduced by Radul \cite{r1}, has
nice properties. For example, $S$ is open and weakly normal (see
Section 2).

Here is the characterization of $\varkappa$-metrizable compacta
mentioned above.

\begin{thm}
For a compact space $X$ the following conditions are equivalent:
\begin{itemize}
\item[(i)] $X$ is $\varkappa$-metrizable;
\item[(ii)] For any embedding of $X$ in a compactum $Y$ there exists
a monotone, homogeneous and weakly additive extender $u\colon
C(X)\to C(Y)$;
\item[(iii)] For any embedding of $X$ in a compactum $Y$ there exists
a continuous map $r:Y\to S(X)$ such that $r(x)=\delta_x$ for all
$x\in X$.
\item[(iv)] For any embedding of $X$ in a compactum $Y$ there exists
an usco map $r:Y\to\lambda X$ such that $r(x)=\eta_x$ for all $x\in
X$;
\end{itemize}
\end{thm}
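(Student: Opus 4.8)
The plan is to run the equivalences through the core triangle (i) $\Leftrightarrow$ (ii) $\Leftrightarrow$ (iii), which is a duality statement coupled with Shapiro's theorem, and then to attach (iv) via the canonical copy of the superextension $\lambda X$ sitting inside $S(X)$. First I would dispose of (ii) $\Leftrightarrow$ (iii), which is pure duality: given a continuous $r\colon Y\to S(X)$ with $r(x)=\delta_x$, put $u(f)(y)=r(y)(f)$; since each $r(y)$ is monotone, homogeneous and weakly additive \emph{as a functional}, $u$ inherits the same three properties, $u(f)\in C(Y)$ because $r$ is continuous into the pointwise topology, and $u(f)|X=f$ because $r(x)=\delta_x$. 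Conversely $r(y)(f):=u(f)(y)$ recovers a continuous map into $S(X)$ fixing the $\delta_x$. There is no obstacle here beyond bookkeeping.

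For (i) $\Leftrightarrow$ (ii) I would lean on Shapiro's cited theorem. The implication (ii) $\Rightarrow$ (i) is immediate: by the stated consequences of (i)--(iii), a monotone, homogeneous, weakly additive $u$ sends $C_+(X)$ into $C_+(Y)$ and is monotone there, so its restriction is a monotone extender of the type Shapiro requires, whence $X$ is $\varkappa$-metrizable. The reverse (i) $\Rightarrow$ (ii) is the one genuine analytic step in this part: Shapiro only supplies a monotone extender $u_0\colon C_+(X)\to C_+(Y)$, and one must upgrade it to a map on all of $C(X)$ that is in addition homogeneous and weakly additive. I would do this by an explicit normalization --- writing a general $f$ through its nonnegative truncations $(f-t)_+$ and defining $u(f)$ by an $\inf$/layer-cake formula built from $u_0$ --- where monotonicity of $u_0$ is exactly what makes the formula independent of the auxiliary data and the three algebraic identities then fall out by direct checking.

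The superextension link is where $\lambda X$ enters. The key device is the embedding $j\colon\lambda X\to S(X)$ sending a maximal linked system $M$ to the max--min functional $\varphi_M(f)=\max_{A\in M}\min_{x\in A}f(x)$; this $\varphi_M$ is monotone and weakly additive for trivial reasons, and homogeneous because the self-duality of maximal linked systems forces $\min_{A\in M}\max_{x\in A}f=\max_{A\in M}\min_{x\in A}f$, so $j(M)\in S(X)$, and one checks $j(\eta_x)=\delta_x$. For (iii) $\Rightarrow$ (iv) I would then define the set-valued map $\rho\colon S(X)\to\lambda X$ by $\rho(\varphi)=\{M\in\lambda X:\varphi_M\le\varphi\}$. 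Its graph is closed, being cut out by the pointwise inequalities $\varphi_M(f)\le\varphi(f)$, so $\rho$ is usco once nonemptiness of the values is secured by a Zorn/compactness argument; moreover $\rho(\delta_x)=\{\eta_x\}$, because $\varphi_M\le\delta_x$ forces every member of $M$ to cluster at $x$. Composing, $\rho\circ r$ is the required usco retraction $Y\to\lambda X$ with $(\rho\circ r)(x)=\eta_x$.

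The hard part is the converse (iv) $\Rightarrow$ (iii) (equivalently (iv) $\Rightarrow$ (i)): here one must manufacture single-valued continuous data out of the upper-semicontinuous, set-valued data of the superextension. The naive attempt $u_0(f)(y)=\max_{M\in r(y)}\varphi_M(f)$ is only upper semicontinuous (and the $\min$ version only lower semicontinuous), so it does not feed into Shapiro's theorem. The resolution I would pursue exploits that $S(X)$ is a compact \emph{convex} subset of $\mathbb R^{C(X)}$: compose $j$ with the usco $r$, take the closed convex hull of the values to obtain an usco convex-valued map into $S(X)$, and then average it to a continuous single-valued selection by a barycentric construction, using precisely the openness and weak normality of the functor $S$ recorded in Section 2 to guarantee that the averaged map still fixes each $\delta_x$. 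Controlling this passage from the set-valued, semicontinuous world of $\lambda X$ to the single-valued, continuous world of $S(X)$ is, I expect, the main obstacle of the whole theorem.
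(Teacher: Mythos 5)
Your scheme breaks at three places, two of them fatal. The worst is (iii)$\Rightarrow$(iv): the set-valued map $\rho(\varphi)=\{M\in\lambda X:\varphi_M\le\varphi\}$ can have \emph{empty} values, so no Zorn/compactness argument can secure nonemptiness. Concretely, let $X=\{a,b,c\}$ and let $\varphi\in P(X)\subset S(X)$ be the uniform measure, $\varphi(f)=\tfrac13\bigl(f(a)+f(b)+f(c)\bigr)$. Here $\lambda X$ has exactly four elements: $\eta_a,\eta_b,\eta_c$ and the system $\Delta$ of all subsets of cardinality $\ge2$; the corresponding functionals are $f(a)$, $f(b)$, $f(c)$ and the median of $\bigl(f(a),f(b),f(c)\bigr)$. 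None of these is $\le\varphi$ pointwise: $f=(1,0,0)$ kills the first three, and $f=(0,1,1)$ kills the median (median $1>$ mean $2/3$). So $\rho(\varphi)=\varnothing$. The paper's route is different and does work: from $r_2\colon Y\to S(X)$ one gets the monotone extender $u(f)(\varphi)=\varphi(f)$ into $C(S(X))$, hence by van Douwen's theorem $X$ is \emph{regularly embedded} in $S(X)$, and then Lemma 4.1 builds an usco map $S(X)\to\lambda X$ whose values $\bigcap\{(\overline U)^+:y\in e(U)\}$ are nonempty precisely because every linked family of closed sets extends to a maximal linked system; nothing like your pointwise inequality $\varphi_M\le\varphi$ appears.

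Your (i)$\Rightarrow$(ii) is also a gap rather than a proof: Shapiro supplies only a monotone extender $u_0\colon C_+(X)\to C_+(Y)$, and no truncation/layer-cake formula is actually exhibited or checked. The obstructions are real: membership in $S(X)$ requires homogeneity for \emph{all} real $k$, hence $u(-f)=-u(f)$, a self-duality that formulas of the type $\inf_t\bigl(t-u_0(t-f)\bigr)$ do not satisfy, and such an infimum of continuous functions need not even be continuous. Tellingly, the paper does not attempt any such upgrade; instead it uses the max--min formula (1) to get a canonical monotone, homogeneous, weakly additive extender $u_1\colon C(X)\to C(\lambda X)$, and then -- the deep ingredient your proposal omits entirely -- the fact that $\lambda X$ is a Dugundji space when $X$ is openly generated (Ivanov's theorem, reproved as Corollary 3.3), which yields a linear monotone extension operator $u_2\colon C(\lambda X)\to C(Z)$ with $u_2(1)=1$ on the space $Z$ obtained by attaching $\lambda X$ to $Y$ along $X$; the composition $u_2\circ u_1$ inherits all three properties because $u_2$ is linear and unital. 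Finally, you have the difficulty of the theorem inverted: (iv)$\Rightarrow$(i) is not the hard part, and your proposed barycentric ``averaging'' of an usco convex-valued map into a continuous selection is unsound anyway (upper semicontinuous convex-valued maps need not admit continuous selections; Michael's theorem needs lower semicontinuity). The paper settles (iv)$\Rightarrow$(i) in a few lines: $e(U)=\{y\in\I^A:r(y)\subset U^+\}$ is open, $e(U)\cap X=U$, and $e(U)\cap e(V)=\varnothing$ for disjoint $U,V$ since $U^+\cap V^+=\varnothing$; thus $X$ is regularly embedded in $\I^A$, and Shirokov's theorem concludes. (Your (ii)$\Rightarrow$(i), restricting $u$ to $C_+(X)$ and quoting Shapiro, is correct and is a shortcut the paper does not use, but it cannot compensate for the two broken implications above.)
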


Theorem 1.1 remains valid if  $S$ is replaced by the functor $O$,
see Theorem 4.2.

Recall that supercompact spaces and superextensions were introduced
by de Groot \cite{dg}. A space  is {\em supercompact} if it
possesses a binary subbase for its closed subsets. Here, a
collection $\mathcal S$ of closed subsets of $X$ is {\em binary}
provided any linked subfamily of $\mathcal S$ has a non-empty
intersection (we say that a system of subsets of $X$ is {\em linked}
provided any two elements of this system intersect). The
supercompact spaces with binary {\em normal} subbase will be of
special interest for us. A subbase $\mathcal S$ is called normal if
for every $S_0,S_1\in\mathcal S$ with $S_0\cap S_1=\varnothing$
there exists $T_0,T_1\in\mathcal S$ such that $S_0\cap
T_1=\varnothing=T_0\cap S_1$ and $T_0\cup T_1=X$. A space $X$
possessing a binary normal subbase $\mathcal S$ is called {\em
normally supercompact} \cite{vm} and will be denoted by $(X,\mathcal
S)$.

The {\em superextension} $\lambda X$ of $X$ consists of all maximal
linked systems of closed sets in $X$. The family
$$U^+=\{\eta\in\lambda X:F\subset U\hbox{~}\mbox{for
some}\hbox{~}F\in\eta\},$$ $U\subset X$ is open, is a subbase for
the topology of $\lambda X$. It is well known that $\lambda X$ is
normally supercompact. Let $\eta_x$, $x\in X$, be the maximal linked
system of all closed sets in $X$ containing $x$. The map
$x\to\eta_x$ embeds $X$ into $\lambda X$. The book of van Mill
\cite{vm} contains more information about normally supercompact
space and superextensions, see also Fedorchuk-Filippov's book
\cite{f1}.

The paper is organized as follows. Section 2 is devoted to the
functor $S$. In Section 3 we investigate the connection between
$\varkappa$-metrizable compacta and spaces with a closed binary
subbase. As corollaries we obtain Ivanov's  results from \cite{i1}
and \cite{i2} concerning superextensions of $\varkappa$-metrizable
compacta, as well as, results of Moiseev \cite{m1} about closed
hyperspaces of inclusions. The proof of Theorem 1.1 is established
in the final Section 4.

\section{Monotone, homogeneous and weakly additive functionals}

The space $S(X)$ is a subset of the product space $\mathbb R^{C(X)}$
identifying each $\varphi\in S(X)$ with $(\varphi(f))_{f\in
C(X)}\in\mathbb R^{C(X)}$. Let us also note that, according to
\cite[Lemma 1]{r1}, every monotone weakly additive functional is a
non-expanding. So, all $\mu\in S(X)$ are continuous maps on $C(X)$.

\begin{pro}
Let $X$ be a compact space. Then $S(X)$ is a compact convex subset
of $\mathbb R^{C(X)}$ containing the space $P(X)$ of all regular
probability measures on $X$.
\end{pro}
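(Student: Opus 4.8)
The plan is to verify the three assertions — convexity, the inclusion $P(X)\subseteq S(X)$, and compactness — essentially independently, since each of the defining properties of $S(X)$ is a pointwise condition that behaves well under the relevant operations. First I would dispose of convexity. Given $\varphi,\psi\in S(X)$ and $t\in[0,1]$, the functional $t\varphi+(1-t)\psi$ is again monotone, homogeneous and weakly additive: each of the three defining inequalities/identities is preserved under convex (indeed arbitrary nonnegative linear) combinations, since they are linear in $\varphi$. For instance, weak additivity follows from $(t\varphi+(1-t)\psi)(f+k)=t(\varphi(f)+k)+(1-t)(\psi(f)+k)=(t\varphi+(1-t)\psi)(f)+k$, and monotonicity is preserved because the coefficients $t,1-t$ are nonnegative. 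Thus $t\varphi+(1-t)\psi\in S(X)$.

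Next I would establish the inclusion $P(X)\subseteq S(X)$. Each regular probability measure $\mu$ acts on $C(X)$ by $f\mapsto\int_X f\,d\mu$, and this functional is linear and positive with $\mu(X)=1$. Positivity gives monotonicity, linearity gives homogeneity, and additivity together with $\int_X 1\,d\mu=1$ gives weak additivity; hence the integration functional lies in $S(X)$. Moreover, the pointwise-convergence topology on $S(X)\subset\mathbb R^{C(X)}$ restricts on $P(X)$ to the weak$^*$ topology, so $P(X)$ sits inside $S(X)$ as a topological subspace.

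The substantive part is compactness, and the strategy is to exhibit a compact box in $\mathbb R^{C(X)}$ containing $S(X)$ and then show $S(X)$ is closed in it. The key observation — and the main technical point — is that each $\varphi\in S(X)$ is pinned to the correct value on constants: homogeneity gives $\varphi(0)=0$, and then weak additivity yields $\varphi(c)=\varphi(0+c)=\varphi(0)+c=c$ for every constant $c$. Combining this with monotonicity and the trivial bounds $\min f\le f\le\max f$ gives $\varphi(f)\in[\min f,\max f]$ for every $f\in C(X)$. Consequently $S(X)\subseteq\prod_{f\in C(X)}[\min f,\max f]$, which is compact by Tychonoff's theorem.

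It then remains to check that $S(X)$ is closed in $\mathbb R^{C(X)}$. For each fixed $f$ the evaluation map $\varphi\mapsto\varphi(f)$ is continuous in the product topology, so each of the conditions ``$\varphi(f)\le\varphi(g)$ whenever $f\le g$'', ``$\varphi(kf)=k\varphi(f)$'', and ``$\varphi(f+k)=\varphi(f)+k$'' defines a closed subset of $\mathbb R^{C(X)}$, and $S(X)$ is precisely the intersection of all of these. A closed subset of a compact space is compact, which finishes the proof. The only place demanding genuine care is the evaluation $\varphi(c)=c$ on constant functions, on which the whole boundedness estimate — and hence the very choice of the compact container — rests; the remaining steps are routine verifications that closed pointwise conditions are preserved under limits and linear combinations.
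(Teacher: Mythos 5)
Your proof is correct and takes essentially the same route as the paper, which simply invokes ``standard arguments'' for compactness and convexity and identifies $P(X)$ with the monotone linear functionals sending $1$ to $1$; you supply exactly those arguments, with the key point being $\varphi(c)=c$ on constants (from homogeneity plus weak additivity), hence $\varphi(f)\in[\min f,\max f]$, so that $S(X)$ is a closed subset of a compact box. One small quibble: your parenthetical claim that the defining properties are preserved under \emph{arbitrary} nonnegative linear combinations is false for weak additivity, since $(a\varphi+b\psi)(f+k)=(a\varphi+b\psi)(f)+(a+b)k$ requires $a+b=1$; your displayed computation uses a genuine convex combination, so the proof itself is unaffected.
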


\begin{proof}
It can be established by standard arguments that $S(X)$ is a convex
compact subset of $\mathbb R^{C(X)}$. Since $P(X)$ is the space of
all monotone linear maps $\mu\colon C(X)\to\mathbb R$ with
$\mu(1)=1$, equipped with the pointwise topology,  $P(X)$ is a
subspace of $S(X)$.
\end{proof}

Obviously, for any map $f:X\to Y$ between compact spaces the formula
$(S(f)(\mu))(h)=\mu(h\circ f)$, where $\mu\in S(X)$ and $h\in C(Y)$,
defines a map $S(f)\colon S(X)\to S(Y)$. Moreover, $S(g\circ
f)=S(g)\circ S(f)$. So, $S$ is covariant functor in the category
$\mathrm{COMP}$ of compact spaces and (continuous) maps.

We say that a covariant functor $F$ in the category $\mathrm{COMP}$
is {\em weakly normal} if it satisfies the following properties:

\begin{enumerate}
\item[(a)] $F$ preserves injective maps ($F$ is monomorphic);
\item[(b)] $F$ preserves surjective maps ($F$ is epimorphic);
\item[(c)] $F$ preserves intersections ($F(\bigcap_{\alpha\in
A}X_\alpha)=\bigcap_{\alpha\in A}F(X_\alpha)$ for any family of
closed subsets $X_\alpha$ of a given compactum $X$);
\item[(d)] $F$ is continuous ($F$ preserves limits of inverse
systems);
\item[(e)] $F$ preserves weight of infinite compacta;
\item[(f)] $F$ preserves points and the empty set.
\end{enumerate}

The above properties were considered by Shchepin \cite{sc3} who
introduced the import notion of a {\em normal functor}: a weakly
normal functor preserving preimages of sets. We already mentioned
that the functor $S$ is subfunctor of the functor $O$ of order
preserving functionals introduced by Radul \cite{r1}. Since $O$ is
weakly normal \cite{r1}, we conclude that $S$ is injective,
preserves weight, points and the empty set.

\begin{thm}
The functor $S$ is weakly normal, but not normal.
\end{thm}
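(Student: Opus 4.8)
The plan is to establish the six properties (a)--(f) defining weak normality, using the already-known fact that the larger functor $O$ of Radul is weakly normal together with the observation that $S$ is a subfunctor of $O$. Since the excerpt already records that $S$ inherits injectivity, preservation of weight, points, and the empty set from $O$, the remaining work is to verify (b) epimorphicity, (c) preservation of intersections, and (d) continuity; then separately to exhibit a single compactum witnessing failure of normality (non-preservation of preimages).

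First I would handle epimorphicity (b). Given a surjection $f\colon X\to Y$, I want to show $S(f)\colon S(X)\to S(Y)$ is onto. The clean approach is to fix $\psi\in S(Y)$ and produce $\varphi\in S(X)$ with $S(f)(\varphi)=\psi$, i.e. $\varphi(h\circ f)=\psi(h)$ for all $h\in C(Y)$. The natural device is a linear, monotone, norm-nonincreasing extension (a regular extension operator) $e\colon C(Y)\to C(X)$ with $e(h)=h\circ f$ being one valid choice on the subalgebra $\{h\circ f\}$; more robustly, one uses the Tietze--type averaging/retraction: pick any right inverse-free construction by composing $\psi$ with the dual of an extension operator. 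Concretely, set $\varphi=\psi\circ R$ where $R\colon C(X)\to C(Y)$ is a monotone, homogeneous, weakly additive map splitting the inclusion $h\mapsto h\circ f$ (such $R$ exists because $f$ is a surjection between compacta, e.g. $R(g)(y)=\max\{g(x):f(x)=y\}$ needs care for additivity, so instead take $R$ coming from a linear averaging operator over fibers or from a section-based barycenter). One checks $\varphi$ lies in $S(X)$ and $S(f)(\varphi)=\psi$. I expect this to be routine once the right $R$ is named.

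Next, preservation of intersections (c) and continuity (d). For (c), given closed $X_\alpha\subseteq X$ with intersection $X_0$, the inclusion $\bigcap_\alpha S(X_\alpha)\supseteq S(X_0)$ is immediate from functoriality, and the reverse inclusion follows because a functional $\varphi$ lying in every $S(X_\alpha)$ is supported on each $X_\alpha$ (monotonicity forces $\varphi(f)$ to depend only on $f|X_\alpha$), hence supported on $X_0$; here I would lean on the corresponding property of $O$ and the fact that $S(X_\alpha)=S(X)\cap O(X_\alpha)$ as subsets of $\mathbb R^{C(X)}$. For (d), continuity, since $S$ is a closed subfunctor of the continuous functor $O$, the limit of $S$ over an inverse system sits inside the limit of $O$ and is cut out by the same closed conditions (monotone, homogeneous, weakly additive), so $S$ preserves inverse limits; the key point is that these three defining conditions are closed and are respected by the bonding maps.

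The main obstacle, and the genuinely new content, is the \emph{non-normality} part: I must show $S$ does \emph{not} preserve preimages, so I need an explicit map $f\colon X\to Y$ and a closed set $B\subseteq Y$ with $S(f^{-1}(B))\neq S(f)^{-1}(S(B))$. The expected strategy is to take a very small concrete example---for instance $X$ a two- or three-point space or a short arc mapping onto a point or onto an interval---and to compute both sides directly. The failure typically arises because a weakly additive monotone functional can ``mix'' values across distinct fibers in a way a measure cannot; so I would look for $\varphi\in S(X)$ whose image $S(f)(\varphi)$ is concentrated on $B$ (so $\varphi\in S(f)^{-1}(S(B))$) while $\varphi$ itself is not supported on $f^{-1}(B)$ (so $\varphi\notin S(f^{-1}(B))$), using the non-linearity (failure of additivity) of elements of $S(X)$ to construct such a $\varphi$. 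Pinning down the smallest clean counterexample and verifying the membership/non-membership by hand is where the real care is needed; everything else reduces to bookkeeping against the known properties of $O$.
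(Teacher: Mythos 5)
Your treatment of epimorphicity (property (b)) contains a genuine gap, and it is precisely the step where the paper's proof does real work. You propose to split the embedding $h\mapsto h\circ f$ by a single operator $R\colon C(X)\to C(Y)$ that is monotone, homogeneous and weakly additive, and then to set $\varphi=\psi\circ R$. No such $R$ exists in general. Indeed, such an $R$ is the same thing as a continuous map $y\mapsto\mu_y\in S(X)$ with $S(f)(\mu_y)=\delta_y$, where $\mu_y(g)=R(g)(y)$. Now take the quotient map $f\colon[0,1]\to S^1$ identifying $0$ and $1$. For $y$ with one-point fiber $\{t_y\}$, the constraint says $\mu_y(g)=g(t_y)$ for every $g$ with $g(0)=g(1)$; squeezing an arbitrary $g$ between endpoint-equal functions that agree with $g$ near $t_y$ (raise, resp.\ lower, $g$ to $\max g$, resp.\ $\min g$, near the endpoints) and using monotonicity forces $\mu_y=\delta_{t_y}$ for \emph{all} of $C([0,1])$. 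Hence $\mu_y\to\delta_0$ along one side of the base point and $\mu_y\to\delta_1$ along the other, so no continuous choice of $\mu_y$, and therefore no operator $R$, exists. This also shows that none of your candidate constructions can be repaired: a linear $R$ would be a regular averaging operator, which this map does not admit; a section-based $R$ needs a continuous section, which does not exist; and $R(g)(y)=\max\{g(x):f(x)=y\}$ fails homogeneity for negative scalars (and is not even continuous in $y$ unless $f$ is open). The paper avoids any such simultaneous splitting: it fixes the \emph{single} functional $\mu'(h\circ f)=\nu(h)$ on the weakly additive homogeneous subspace $A=\{h\circ f:h\in C(Y)\}$ and extends it to all of $C(X)$ by a Hahn--Banach/Zorn argument (take a maximal extension $(B_0,\mu_0)$; if $\varphi_0\notin B_0$, choose $p$ with $\mu_0(B_0^-)\le p\le\mu_0(B_0^+)$ and extend by $k\varphi_0+c\mapsto kp+c$). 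Extending one functional at a time requires no continuity in $y$, which is exactly why it succeeds where a global $R$ cannot.

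A second deficiency: the non-normality half of your proposal is a plan, not a proof. The paper does not construct a counterexample from scratch; it invokes Radul's example showing that $O$ fails to preserve preimages, noting that it applies to $S$ without modification (the functionals occurring in it already lie in $S$). Your idea of hunting for a small example by ``mixing'' values across fibers needs genuine care, because homogeneity under negative scalars kills the most obvious candidates built from $\min$ or $\max$ alone; the functionals that survive are of the minimax form $\max_{F\in\eta}\min_{x\in F}g(x)$ attached to linked systems, as in the paper's Proposition 2.7. Your sketches of (c) and (d) are essentially what the paper does --- both are inherited from $O$, intersections via Radul's argument and continuity via epimorphicity plus continuity of $O$ --- and are fine modulo details, but as it stands the proposal proves neither of the two assertions that require new content.
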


\begin{proof}
We follow the corresponding proof from \cite{r1} for the functor
$O$. First, let us show $S$ is surjective. Suppose $f\colon X\to Y$
is a surjective map between compacta and $\nu\in S(Y)$. Denote by
$A$ the subspace of $C(X)$ consisting of all functions $h\circ f$,
$h\in C(Y)$. $A$ has the following properties: $0_X\in A$ and $A$
contains both $\varphi+k$ and $k\varphi$ for any constant
$k\in\mathbb R$ provided $\varphi\in A$ (such $A$ will be called a
{\em weakly additive homogeneous subspace} of $C(X)$). We define a
monotone, weakly additive and homogeneous functional $\mu'\colon
A\to\mathbb R$ by $\mu'(f\circ h)=\nu(h)$. If $\mu'$ can be extended
to a functional $\mu\in S(X)$, then $S(f)(\mu)=\nu$. So, next claim
completes the proof that $S$ is epimorphic.

{\em Claim. Let $\mu'\colon A\to\mathbb R$ be a monotone, weakly
additive and homogeneous functional. Then $\mu'$ can be extended to
a monotone, weakly additive and homogeneous functional $\mu\colon
C(X)\to\mathbb R$.}

Following the proof of \cite[Lemma 2]{r1}, we consider the pairs
$(B,\mu_B)$, where $B\supset A$ and $\mu_B$ are, respectively, a
weakly additive homogeneous subspace of $C(X)$ and a monotone weakly
additive homogeneous functional on $B$. We introduced a partial
order on these pairs by $(B,\mu_B)\leq (C,\mu_C)$ iff $B\subset C$
and $\mu_C$ extends $\mu_B$. Then, according to Zorn Lemma, there
exists a maximal pair $(B_0,\mu_0)$. If $B_0\neq C(X)$, take any
$\varphi_0\in C(X)\backslash B_0$ and let $B_0^+$ (resp., $B_0^-$)
to be the set of all $\varphi\in B_0$ with $\varphi\geq\varphi_0$
(resp., $\varphi\leq\varphi_0$). Because $\mu_0$ is monotone, there
exists $p\in\mathbb R$ such that $\mu_0(B_0^-)\leq
p\leq\mu_0(B_0^+)$. Then $B_0$ is disjoint with the set
$\{k\varphi+c:k,c\in\mathbb R\}$ and
$D=B_0\cup\{k\varphi+c:k,c\in\mathbb R\}$ is a weakly additive
homogeneous subspace of $C(X)$. Define the functional
$\mu:D\to\mathbb R$ by $\mu|B_0=\mu_0$ and $\mu(k\varphi+c)=kp+c$
for all constants $k,c$. It is easily seen that $\mu$ is a monotone,
weakly additive and homogeneous functional on $D$ extending $\mu_0$.
This contradicts the maximality of $(B_0,\mu_0)$. Hence, $B_0=C(X)$.

Therefore, $S$ is epimorphic. Combining this fact with continuity of
the functor $O$ (see \cite{r1}), we obtain that $S$ is also
continuous. The arguments from the proof of \cite[Lemma 5 and
Proposition 5]{r1} imply that $S$ preserves intersections. So, $S$
is weakly normal. The example provided in \cite{r1} that $O$ does
not preserve preimages shows (without any modification) that $S$
also does not preserve intersections.
\end{proof}

Since $S$ is monomorphic, for any $\mu\in S(X)$ the set
$supp(\mu)=\bigcap\{H\subset X:H\hbox{~}\mbox{is closed
and}\hbox{~}\mu\in S(H)\}$ is well defined and is called support of
$\mu$. Since $S$ is continuous and epimorphic, next corollary
follows from \cite[Proposition 3.5]{sc3}.

\begin{cor}
The set $S_\omega(X)=\{\mu\in S(X):supp(\mu)\hbox{~}\mbox{is
finite}\}$ is dense in $S(X)$ for any compactum $X$.
\end{cor}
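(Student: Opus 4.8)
The plan is to derive the corollary from the two structural features of $S$ established in Theorem 2.2 — epimorphism and continuity — exactly as in Shchepin's \cite[Proposition 3.5]{sc3}. Since $S$ is monomorphic and preserves intersections, for every $\mu\in S(X)$ the support $\mathrm{supp}(\mu)$ is well defined and satisfies $\mu\in S(\mathrm{supp}(\mu))$; hence $\mu\in S_\omega(X)$ iff $\mu$ lies in the image of $S(i_E)\colon S(E)\to S(X)$ for some finite $E\subset X$, where $i_E$ is the inclusion. The goal is to approximate an arbitrary $\mu$ by such elements, and the whole argument rests on two transfer principles.

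\textbf{Transfer along a single surjection.} If $g\colon Z\to W$ is surjective, then $S(g)\colon S(Z)\to S(W)$ is a continuous surjection of compacta (epimorphism, Theorem 2.2), hence a closed map, so it carries dense subsets of $S(Z)$ onto dense subsets of $S(W)$. Moreover $S(g)$ sends $S_\omega(Z)$ into $S_\omega(W)$: if $\nu=S(i_E)(\nu')$ with $E$ finite, then factoring $g\circ i_E=i_{g(E)}\circ(g|_E)$ gives $S(g)(\nu)=S(i_{g(E)})(S(g|_E)(\nu'))$, which is supported in the finite set $g(E)$. Consequently, density of $S_\omega(Z)$ in $S(Z)$ forces density of $S_\omega(W)$ in $S(W)$.

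\textbf{Transfer through an inverse limit.} Let $X=\varprojlim\{X_\alpha,p^\beta_\alpha\}$ with surjective projections $p_\alpha\colon X\to X_\alpha$. By continuity, $S(X)=\varprojlim\{S(X_\alpha),S(p^\beta_\alpha)\}$, so a basic neighbourhood of $\mu$ has the form $S(p_\alpha)^{-1}(U)$ with $U$ open in $S(X_\alpha)$. Assuming each $S_\omega(X_\alpha)$ is dense, pick $\nu_\alpha\in S_\omega(X_\alpha)\cap U$ and write $\nu_\alpha=S(i_E)(\nu'_\alpha)$ for the finite set $E=\mathrm{supp}(\nu_\alpha)$. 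Choosing one point $x_y\in p_\alpha^{-1}(y)$ for each $y\in E$ yields a finite $E'\subset X$ for which $p_\alpha|E'\colon E'\to E$ is a bijection of finite discrete spaces, hence a homeomorphism; then $\nu=S(i_{E'})\big((S(p_\alpha|E'))^{-1}(\nu'_\alpha)\big)$ has finite support $\subseteq E'$ and satisfies $S(p_\alpha)(\nu)=\nu_\alpha\in U$, so $\nu\in S_\omega(X)\cap S(p_\alpha)^{-1}(U)$. Thus density of every $S_\omega(X_\alpha)$ forces density of $S_\omega(X)$.

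With these principles the corollary assembles in three steps. First, when $X$ is finite the claim is trivial, as $S_\omega(X)=S(X)$. Second, the Cantor set $C$ is the limit of an inverse sequence of finite discrete spaces, so the inverse-limit transfer makes $S_\omega(C)$ dense in $S(C)$. Third, an arbitrary compactum $X$ is the limit of its metrizable quotients $X_\alpha$ with surjective projections, and each metrizable $X_\alpha$ is a continuous image of $C$; the single-surjection transfer gives density of $S_\omega(X_\alpha)$, and then the inverse-limit transfer gives density of $S_\omega(X)$. (Alternatively, one may map $X$ directly onto $X$ from a zero-dimensional compactum and apply the single-surjection transfer just once.) The step I expect to be the crux is the lifting inside the inverse-limit transfer: it is essential that finite support be detected by intersection-preservation, so that $\mu\in S(\mathrm{supp}\,\mu)$, and that $S$ turn the bijection $p_\alpha|E'$ into a homeomorphism of the finite models $S(E')\cong S(E)$; this is precisely what lets a finitely supported approximant upstairs be produced from one downstairs. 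Everything else is formal, following from continuity, epimorphism, and the closed-map density principle.
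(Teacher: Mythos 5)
Your proof is correct and takes essentially the same approach as the paper: the paper obtains the corollary from precisely the two properties you use, continuity and epimorphism of $S$, by citing Shchepin's Proposition 3.5 of \cite{sc3}, whose content is exactly the surjection/inverse-limit transfer argument you reconstruct. In effect you have supplied a self-contained proof of the cited proposition (via finite spaces, the Cantor set, and metrizable quotients), not a different route.
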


\begin{thm}
A surjective map $f\colon X\to Y$ is open if and only if the map
$S(f)\colon S(X)\to S(Y)$ is open.
\end{thm}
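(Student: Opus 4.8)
The plan is to use the standard characterization of openness for surjections of compacta: a surjection $g\colon A\to B$ between compact spaces is open if and only if it lifts convergent nets, i.e. for every $a\in A$ and every net $b_\lambda\to g(a)$ in $B$ there is a subnet and points $a_\lambda\to a$ with $g(a_\lambda)=b_\lambda$. Throughout I will exploit the commuting square $S(f)\circ\delta_X=\delta_Y\circ f$ and the fact that, since $S$ preserves intersections and is epimorphic (Theorem 2.2), supports behave functorially: $\mathrm{supp}(S(f)(\mu))=f(\mathrm{supp}(\mu))$ for every $\mu\in S(X)$. In particular $S(f)(\mu)=\delta_y$ forces $\mathrm{supp}(\mu)\subset f^{-1}(y)$, i.e. $\mu\in S(f^{-1}(y))$.

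For the implication ``$S(f)$ open $\Rightarrow f$ open'' I would argue as follows. Fix $x\in X$, put $y_0=f(x)$, and let $y_\lambda\to y_0$ be an arbitrary net in $Y$; it suffices to produce, after passing to a subnet, points $x_\lambda\to x$ with $f(x_\lambda)=y_\lambda$. In $S(Y)$ we have $\delta_{y_\lambda}\to\delta_{y_0}=S(f)(\delta_x)$, so openness of $S(f)$ yields (along a subnet) functionals $\mu_\lambda\to\delta_x$ with $S(f)(\mu_\lambda)=\delta_{y_\lambda}$, whence $\mathrm{supp}(\mu_\lambda)\subset f^{-1}(y_\lambda)$. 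The one point needing verification is that the supports cannot drift away from $x$: given any open $V\ni x$, if $\mathrm{supp}(\mu_\lambda)\cap V=\varnothing$ frequently, then $\mu_\lambda\in S(K)$ for $K=X\setminus V$, and choosing $g\in C(X)$ with $g(x)=1$ and $g|K=0$ gives $\mu_\lambda(g)=0$ frequently (since $u(0)=0$ for functionals in $S$), contradicting $\mu_\lambda(g)\to\delta_x(g)=1$. Hence for every such $V$ one has eventually $V\cap f^{-1}(y_\lambda)\neq\varnothing$; a routine diagonal argument over the neighbourhood filter of $x$ then yields a subnet $x_\lambda\to x$ with $f(x_\lambda)=y_\lambda$, so $f$ is open.

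For the converse, ``$f$ open $\Rightarrow S(f)$ open'', which is the substantive half, my plan is to first reduce to metrizable spaces. Since $S$ is continuous and preserves weight (Theorem 2.2), and since by Shchepin's spectral theorem every open surjection of compacta is the limit of an inverse system of open surjections between metrizable compacta, it is enough to establish the metrizable case and then pass to the limit. For metrizable $X,Y$, openness of $f$ is equivalent to continuity of the fibre map $y\mapsto f^{-1}(y)$ from $Y$ into the hyperspace $\exp X$; using this continuity together with the density of the finitely supported functionals $S_\omega(X)$ (Corollary 2.3), I would construct, for a given $\mu\in S(X)$ and a net $\nu_\lambda\to S(f)(\mu)$, a lift $\mu_\lambda\to\mu$ with $S(f)(\mu_\lambda)=\nu_\lambda$ by transporting $\mu$ continuously along the fibres, in direct parallel with Radul's treatment of the functor $O$ in \cite{r1}.

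The main obstacle is exactly this continuous lifting of order preserving functionals: unlike probability measures, the elements of $S(X)$ are nonlinear functionals and admit no disintegration over the fibres, so the lift must be produced by hand from the continuity of $y\mapsto f^{-1}(y)$ and the extension mechanism underlying Theorem 2.2 rather than by averaging. I expect that, once the construction is set up on the dense set $S_\omega(X)$ and shown to depend continuously on the data, the general case follows by continuity of $S(f)$ and compactness of $S(X)$, completing the proof.
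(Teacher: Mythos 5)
Your proof of the implication ``$S(f)$ open $\Rightarrow f$ open'' rests on a claim that is false for this functor, namely that $\mathrm{supp}(S(f)(\mu))=f(\mathrm{supp}(\mu))$, and in particular that $S(f)(\mu)=\delta_y$ forces $\mu\in S(f^{-1}(y))$. That identity is exactly the property of preserving preimages, and Theorem 2.2 of the paper says that $S$ is weakly normal but \emph{not} normal: preimages are precisely what $S$ fails to preserve (only the inclusion $\mathrm{supp}(S(f)(\mu))\subset f(\mathrm{supp}(\mu))$ holds in general). A concrete counterexample: let $X=\{a,b,c\}$, $Y=\{u,v\}$, $f(a)=u$, $f(b)=f(c)=v$, and let $\mu(\varphi)$ be the median of $\varphi(a),\varphi(b),\varphi(c)$; this is $\varphi_\eta$ for the maximal linked system $\eta$ generated by the two-point subsets of $X$, so $\mu\in S(X)$ by Proposition 2.6. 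Then $\mu(\psi\circ f)=\mathrm{median}\bigl(\psi(u),\psi(v),\psi(v)\bigr)=\psi(v)$, so $S(f)(\mu)=\delta_v$, yet $\mathrm{supp}(\mu)=X\not\subset f^{-1}(v)$. Consequently your step ``eventually $\mathrm{supp}(\mu_\lambda)\cap V\neq\varnothing$, hence eventually $f^{-1}(y_\lambda)\cap V\neq\varnothing$'' does not follow. This direction can be salvaged, but by a different argument that bypasses supports: if $f^{-1}(y_\lambda)\cap\overline{V_0}=\varnothing$ for a smaller neighbourhood $V_0$ of $x$, pick $\psi\in C(Y)$ with $0\le\psi\le 1$, $\psi(y_\lambda)=0$, $\psi=1$ on $f(\overline{V_0})$, and $g\in C(X)$ with $0\le g\le 1$, $g(x)=1$, $g=0$ off $V_0$; then $\psi\circ f\ge g$, so monotonicity gives $0=\mu_\lambda(\psi\circ f)\ge\mu_\lambda(g)\ge 0$, contradicting $\mu_\lambda(g)\to 1$.

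The substantive half, ``$f$ open $\Rightarrow S(f)$ open,'' is not proved in your proposal at all: what you give is a reduction plan (spectral theorem, continuity of $y\mapsto f^{-1}(y)$, density of $S_\omega(X)$) ending with the admission that the continuous lifting construction --- which is the entire content of this implication, since elements of $S(X)$ admit no fibrewise disintegration --- is still to be found. The paper proceeds differently: it introduces the notion of an $S$-convex subset of $S(X)$ (a set $A$ containing every $\mu\in S(X)$ with $\inf A\le\mu\le\sup A$) and then runs Radul's proof of the corresponding statement for the functor $O$ \cite[Theorem 1]{r2} with $O$-convexity replaced by $S$-convexity; no metrizable reduction is involved. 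So, as it stands, the first half of your argument contains an incorrect step (repairable, as above), and the second half is a research outline rather than a proof, with the missing construction located exactly where the difficulty of the theorem lies.
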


\begin{proof}
We say that a subset $A\subset S(X)$ is $S$-convex if $A$ contains
every $\mu\in S(X)$ with $\inf A\leq\mu\leq\sup A$. Here, the
functional $\inf A:C(X)\to\mathbb R$ is defined by $(\inf
A)(h)=\inf\{\nu(h):\nu\in S(X)\}$, and similarly $\sup A$. Note that
$\inf A$ and $\sup A$ are monotone and weakly additive, but not
homogeneous. So, they are not elements of $S(X)$.

Now, using the notion of $S$-continuity one can proof this theorem
following the arguments from the proof of \cite[Theorem 1]{r2}.
\end{proof}

Because $\varkappa$-metrizable compacta are exactly the compacta
which can be represented of the limit of an inverse sigma-spectrum
with open bonding projections \cite{sc3}, we obtain the following
corollary.

\begin{cor}
A compactum $X$ is $\varkappa$-metrizable if and only if $S(X)$ is
$\varkappa$-metrizable.
\end{cor}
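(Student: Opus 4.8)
The plan is to deduce both implications from Shchepin's spectral characterization of $\varkappa$-metrizability (the description of such compacta as limits of inverse sigma-spectra with open bonding projections, already quoted above from \cite{sc3}), combined with the properties of $S$ established in this section: continuity, preservation of weight (Theorem 2.2), and the equivalence that a surjection $f$ is open precisely when $S(f)$ is open (Theorem 2.4).

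For the forward implication I would start by writing a $\varkappa$-metrizable $X$ as the limit $X=\varprojlim\{X_\alpha,p_\alpha^\beta\}$ of a sigma-spectrum of metrizable compacta whose bonding maps $p_\alpha^\beta$ are open. Applying the functor $S$ and using its continuity gives $S(X)=\varprojlim\{S(X_\alpha),S(p_\alpha^\beta)\}$; since $S$ preserves weight, each $S(X_\alpha)$ is again metrizable, so this is a sigma-spectrum, and by Theorem 2.4 every $S(p_\alpha^\beta)$ is open. Hence $S(X)$ is the limit of a sigma-spectrum with open projections, i.e. it is $\varkappa$-metrizable.

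The reverse implication is the substantial one. I would represent an arbitrary $X$ as the limit of some sigma-spectrum $\{X_\alpha,p_\alpha^\beta\}$ of metrizable compacta (now \emph{not} assuming open projections), and again form $S(X)=\varprojlim\{S(X_\alpha),S(p_\alpha^\beta)\}$, a sigma-spectrum with metrizable factors. On the other hand, the hypothesis that $S(X)$ is $\varkappa$-metrizable provides a second sigma-spectrum representation of $S(X)$ whose bonding maps are open. The crux is to compare these two spectra via Shchepin's spectral theorem \cite{sc3}, which yields cofinally isomorphic subspectra, with the isomorphisms commuting with the bonding and limit projections. Transporting openness across such an isomorphism shows that cofinally many bonding maps $S(p_\alpha^\beta)$ are open; Theorem 2.4 then forces the corresponding $p_\alpha^\beta$ to be open as well. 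The resulting cofinal subspectrum of $\{X_\alpha\}$ has open bonding maps and limit $X$, so $X$ is $\varkappa$-metrizable.

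The main obstacle I anticipate is the bookkeeping in the spectral-theorem step: one must check that both descriptions of $S(X)$ are genuine sigma-spectra (this is where preservation of weight is essential, to keep the factors $S(X_\alpha)$ metrizable), and that the cofinal isomorphism supplied by the spectral theorem is compatible enough with projections that openness at the level of the open-projection spectrum transfers to the maps $S(p_\alpha^\beta)$, and only then --- via the already proved Theorem 2.4 --- down to the maps $p_\alpha^\beta$ themselves. Everything else reduces to routine applications of the continuity and weak normality of $S$.
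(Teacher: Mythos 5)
Your proof is correct and takes essentially the same approach as the paper: the paper derives this corollary in a single remark from the same characterization of $\varkappa$-metrizable compacta as limits of inverse sigma-spectra with open bonding projections (citing \cite{sc3}), combined with Theorem 2.4 and the continuity and weight preservation of $S$. Your spectral-theorem bookkeeping for the reverse implication is precisely what the paper's one-line derivation leaves implicit.
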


\begin{pro}
For every compact space $X$ there exists an embedding $i:\lambda
X\to S(X)$ such that $i(\eta_x)=\delta_x$, $x\in X$. Moreover,
$i(\lambda X)\neq S(X)$ provided $X$ is disconnected.
\end{pro}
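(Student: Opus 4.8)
The plan is to associate to each maximal linked system $\eta\in\lambda X$ the functional $i(\eta)\colon C(X)\to\mathbb R$ given by $i(\eta)(f)=\sup_{F\in\eta}\min_{x\in F}f(x)$ (every $F\in\eta$ is a nonempty compactum, so the inner extremum is attained) and to show that this is the desired embedding. The heart of the matter is the \emph{self-duality identity}
\begin{equation*}
\sup_{F\in\eta}\min_{x\in F}f(x)=\inf_{F\in\eta}\max_{x\in F}f(x),
\end{equation*}
valid for every maximal linked $\eta$ and every $f\in C(X)$; I expect its proof to be the main obstacle, since it is precisely what will force $i(\eta)$ to be homogeneous for negative scalars. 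Writing $s$ and $t$ for the two sides, I would get $s\le t$ from linkedness: any $F,G\in\eta$ meet in some point $z$, whence $\min_{F}f\le f(z)\le\max_{G}f$. For $t\le s$ I would use maximality: the closed set $P=\{x:f(x)\ge t\}$ meets every member of $\eta$ (on each $F$ the value $\max_{F}f\ge t$ is attained), so $P\in\eta$ by maximality, and then $s\ge\min_{P}f\ge t$.

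Granting the identity, verifying $i(\eta)\in S(X)$ should be routine. Monotonicity and weak additivity are immediate from the $\sup$-$\min$ form. Homogeneity $i(\eta)(kf)=k\,i(\eta)(f)$ for $k\ge0$ is immediate as well; for $k<0$ I would rewrite $\min_{F}(kf)=k\max_{F}f$ and invoke the identity to pass from the $\sup$-$\min$ expression to $k\inf_{F}\max_{F}f=k\,i(\eta)(f)$. Taking the member $F=\{x\}\in\eta_x$ then gives $i(\eta_x)(f)=f(x)$, that is, $i(\eta_x)=\delta_x$.

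For injectivity, if $\eta\ne\xi$ I would pick $F\in\eta\setminus\xi$; by maximality of $\xi$ there is $G\in\xi$ with $F\cap G=\varnothing$, and normality of $X$ yields $f\in C(X)$ with $0\le f\le1$, $f|F=0$ and $f|G=1$. The $\inf$-$\max$ form applied with $F\in\eta$ gives $i(\eta)(f)=0$, while the $\sup$-$\min$ form applied with $G\in\xi$ gives $i(\xi)(f)=1$. For continuity it suffices, since $S(X)$ carries the pointwise topology, to check that $\eta\mapsto i(\eta)(f)$ is continuous for each fixed $f$; here the identity yields
\begin{equation*}
\{\eta:i(\eta)(f)<c\}=\{x:f(x)<c\}^+\quad\text{and}\quad\{\eta:i(\eta)(f)>c\}=\{x:f(x)>c\}^+,
\end{equation*}
which are subbasic open subsets of $\lambda X$. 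As $\lambda X$ is compact and $S(X)\subset\mathbb R^{C(X)}$ is Hausdorff, the continuous injection $i$ is automatically an embedding.

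Finally, for the disconnected case I would write $X=X_0\sqcup X_1$ with $X_0,X_1$ nonempty clopen and pick $x_j\in X_j$. The measure $\mu=\tfrac12(\delta_{x_0}+\delta_{x_1})$ lies in $P(X)\subset S(X)$ by Proposition 2.1. Evaluating on the continuous indicator $f=\chi_{X_1}$ gives $\mu(f)=\tfrac12$, whereas $i(\eta)(f)=\sup_{F\in\eta}\min_{x\in F}\chi_{X_1}(x)\in\{0,1\}$ for every $\eta$, because each $\min_{F}\chi_{X_1}$ equals $0$ or $1$. Hence $\mu\notin i(\lambda X)$, so $i(\lambda X)\ne S(X)$.
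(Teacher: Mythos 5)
Your proposal is correct, and its core is the same as the paper's: both build the embedding from the functional $\varphi_\eta(f)=\sup_{F\in\eta}\min_{x\in F}f(x)$ and both lean on the self-duality identity $\sup_{F\in\eta}\min_F f=\inf_{F\in\eta}\max_F f$. The differences are worth noting. First, the paper does not prove that identity (or the fact that $\varphi_\eta$ is monotone, homogeneous and weakly additive); it cites Shapiro's argument, whereas you prove it directly, and your maximality argument (the set $P=\{x:f(x)\ge t\}$ meets every member of $\eta$, hence $P\in\eta$) is exactly the right one, so your write-up is more self-contained. Second, for continuity the paper runs an $\varepsilon$-argument: given a basic neighborhood of $\varphi_\eta$ determined by $f_1,\dots,f_k$ and $\epsilon_1,\dots,\epsilon_k$, it uses the identity to find $F_i,H_i\in\eta$ inside the sets $\{f_i>\varphi_\eta(f_i)-\epsilon_i\}$ and $\{f_i<\varphi_\eta(f_i)+\epsilon_i\}$ and takes the corresponding intersection of subbasic sets; your observation that $\{\eta:i(\eta)(f)<c\}=\{f<c\}^+$ and $\{\eta:i(\eta)(f)>c\}=\{f>c\}^+$ is the same mechanism packaged more cleanly, identifying the coordinate preimages as subbasic open sets outright. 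Third, and this is the genuinely different step: for the ``moreover'' part the paper argues abstractly that $S(X)$ is connected (being convex in $\mathbb R^{C(X)}$) while $\lambda X$ is disconnected whenever $X$ is, so the compact image $i(\lambda X)$ cannot be all of $S(X)$; you instead exhibit an explicit element of $S(X)\setminus i(\lambda X)$, namely $\mu=\tfrac12(\delta_{x_0}+\delta_{x_1})$, using that every $i(\eta)$ takes only the values $0$ or $1$ on the indicator of a clopen set. The paper's argument is shorter but quietly uses the (true, though unproved there) fact that $\lambda X$ is disconnected when $X$ is; yours avoids that fact entirely and shows concretely how far $i(\lambda X)$ is from being convex, at the cost of a small computation. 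Both are complete proofs.
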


\begin{proof}
Following the proof of \cite[Theorem 3]{sh}, one can show that for
every $\eta\in\lambda X$ and $f\in C(X)$ we have
$$\max_{F\in\eta}\min_{x\in F}f(x)=\min_{F\in\eta}\max_{x\in
F}f(x)\leqno{(1)}$$ and the equality
$\varphi_{\eta}(f)=\max_{F\in\eta}\min_{x\in F}f(x)$ defines a
monotone homogeneous and weakly additive map
$\varphi_{\eta}:C(X)\to\mathbb R$. So, $\varphi_{\eta}\in S(X)$ for
every $\eta\in\lambda X$. Moreover,
$\varphi_{\eta_x}(f)=\max_{\{F:x\in F\}}\min_{y\in F}f(y)=f(x)$ for
any $x\in X$. Therefore, we obtain a map $i:\lambda X\to S(X)$,
$i(\eta)=\varphi_{\eta}$ with $i(\eta_x)=\delta_x$, $x\in X$. To
prove the first part of the proposition we need to show that $i$ is
injective and continuous.

Let $\eta\in\lambda X$ and $W$ be a neighborhood of $\varphi_{\eta}$
in $S(X)$. We can assume that $W$ consists of all $\varphi\in S(X)$
such that $|\varphi_{\eta}(f_i)-\varphi(f_i)|<\epsilon_i$ for some
functions $f_i\in C(X)$ and $\epsilon_i>0$, $i=1,..,k$. Let
$U_i=\{x\in X: f_i(x)>\varphi_{\eta}(f_i)-\epsilon_i\}$ and
$V_i=\{x\in X:f_i(x)<\varphi_{\eta}(f_i)+\epsilon_i\}$. Because of
(1), for each $i\leq k$ there exist $F_i, H_i\in\eta$ such that
$F_i\subset U_i$ and $H_i\subset V_i$. Then
$G=\bigcap_{i=1}^{i=k}(U_i^+\cap V_i^+)$ is a neighborhood of $\eta$
in $\lambda X$, and $i(\xi)\in W$ for all $\xi\in G$. So, $i$ is
continuous.

Suppose $\eta\neq\xi$ are two elements of $\lambda X$. Then there
exist $F_0\in\eta$ and $H_0\in\xi$ with $F_0\cap H_0=\varnothing$.
Let $f\in C_+(X)$ be a function such that $f\leq 1$, $f(F_0)=1$ and
$f(H_0)=0$. Consequently,
$\varphi_{\eta}(f)=\max_{F\in\eta}\min_{x\in F}f(x)=1$ and
$\varphi_{\xi}(f)=\min_{H\in\xi}\max_{x\in H}f(x)=0$. Hence, $i$ is
injective.

The second part of this proposition follows directly because $S(X)$
is always connected (as a convex set in $\mathbb R^{C(X)}$) while
$\lambda X$ is disconnected provided so is $X$.
\end{proof}

\section{$\varkappa$-metrizable compacta and superextensions}

Recall that a compactum $X$ is openly generated \cite{sc3} if $X$ is
the limit space of an inverse sigma-spectrum with open projections.
Shchepin \cite{sc1} proved that any $\varkappa$-metrizable compactum
is openly generated. On the other hand, every openly generated
compactum is $\varkappa$-metrizable (see \cite{sc3}) which follows
from Ivanov's result \cite{i1} that $\lambda X$ is a Dugundji space
provided $X$ is openly generated. We are going to use Ivanov's
theorem in the proof of Theorem 1.1 next section. That's why we
decided to establish in this section an independent proof of a more
general fact, see Proposition 3.2. Part of this proof is Lemma 3.1
below. Recall that Lemma 3.1 was established by Shirokov \cite{s}
but his proof is based on Ivanov's result mentioned above. Our proof
of Lemma 3.1 is based on the Shchepin results \cite{sc2} about
inverse systems with open projections.

We say that a subset $X$ of $Y$ is {\em regularly embedded} in $Y$
if there exists a correspondence $e$ from the topology of $X$ into
the topology of $Y$ such that $e(\varnothing)=\varnothing$,
$e(U)\cap X=U$, and $e(U)\cap e(V)=\varnothing$ provided $U\cap
V=\varnothing$.

\begin{lem}
Let $X$ be  an openly generated compactum. Then every embedding of
$X$ in a space $Y$ is regular.
\end{lem}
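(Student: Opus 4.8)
The plan is to reduce the statement to the case of an embedding into a Tychonoff cube and then to run a transfinite construction along an open spectrum representing $X$. First I would record an elementary restriction principle: if $X\subset Y\subset Z$ and $X$ is regularly embedded in $Z$ via an operator $e$, then $U\mapsto e(U)\cap Y$ is a regular operator exhibiting $X$ as regularly embedded in $Y$, since each of the three defining properties ($e(\varnothing)=\varnothing$, $e(U)\cap X=U$, and disjointness) passes to the intersection with $Y$. As every compactum $Y$ embeds in some Tychonoff cube $\mathbb{I}^{\tau}$, it suffices to prove that an openly generated $X$ is regularly embedded in $\mathbb{I}^{\tau}$ whenever $X\subset\mathbb{I}^{\tau}$.

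Next I would bring in the spectral descriptions. By Shchepin's theorem $X=\varprojlim\{X_\alpha,p^\beta_\alpha,A\}$ is the limit of a sigma-spectrum with metrizable terms $X_\alpha$ and open projections $p_\alpha$, while $\mathbb{I}^{\tau}=\varprojlim\{\mathbb{I}^{B},\pi^{B'}_{B}\}$ over the countable subsets $B\subset\tau$, again with metrizable terms and open projections. Using the spectral theorem of \cite{sc2}, I would replace the inclusion $X\hookrightarrow\mathbb{I}^{\tau}$ by a morphism of cofinal subspectra: after reindexing, there are embeddings $j_\alpha\colon X_\alpha\hookrightarrow\mathbb{I}^{B_\alpha}$ with $\pi_{B_\alpha}|X=j_\alpha\circ p_\alpha$ and with the bonding squares commuting, so that the spectrum of $X$ sits level by level inside that of the cube.

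I would then solve the problem one metrizable level at a time and glue. Each $X_\alpha$ is a metrizable compactum embedded in the metrizable compactum $\mathbb{I}^{B_\alpha}$, hence is regularly embedded (the metrizable case being classical, via a $\sigma$-discrete base), yielding regular operators $e_\alpha$ from the topology of $X_\alpha$ into that of $\mathbb{I}^{B_\alpha}$. Pulling $e_\alpha$ back through the projections gives an operator on the base $\{p_\alpha^{-1}(W):W\ \text{open in}\ X_\alpha\}$ of $X$ with values among the sets $\pi_{B_\alpha}^{-1}(\cdot)$ of $\mathbb{I}^{\tau}$. Openness of the bonding maps is used to arrange that these level operators are coherent, so that $e_\beta$ extends $e_\alpha$ for $\beta\ge\alpha$, the $\omega$-completeness of $A$ handling the limit stages by unions. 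Finally the union $e_0=\bigcup_\alpha e_\alpha$ is a regular operator on a base of $X$, and extending it by $e(U)=\bigcup\{e_0(V):V\ \text{basic},\ V\subset U\}$ produces a regular operator on all of $\tau_X$: the identities $e(\varnothing)=\varnothing$ and $e(U)\cap X=U$ are immediate, and disjointness is inherited from the basic case.

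I expect the coherence step to be the crux. Producing a regular operator on a single metrizable level is routine, but forcing the $e_\alpha$ to be mutually compatible along the spectrum — so that their union is a single-valued regular operator and not merely an incoherent family — is exactly where Shchepin's open-spectrum machinery is needed. Openness of $p^\beta_\alpha$ is what allows a regular operator at level $\alpha$ to be transported to level $\beta$ without creating overlaps among the images of disjoint sets, and $\omega$-completeness of $A$ is what lets the construction survive the limit stages.
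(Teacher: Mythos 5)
Your reduction to embeddings into a Tychonoff cube (via the restriction principle $U\mapsto e(U)\cap Y$) is sound and matches the paper's first step, and the metrizable base case is indeed classical (the paper cites \cite[\S21, XI, Theorem 2]{k}). But the heart of your argument --- the claim that the level operators $e_\alpha$ can be chosen coherently, so that ``$e_\beta$ extends $e_\alpha$ for $\beta\ge\alpha$'' and their union is regular --- is asserted, not proved, and it is precisely the nontrivial content of the lemma. Concretely: if $V_1=p_{\alpha}^{-1}(W_1)$ and $V_2=p_{\beta}^{-1}(W_2)$ are disjoint basic cylinders in $X$, you need the cube cylinders $\pi_{B_{\alpha}}^{-1}(e_{\alpha}(W_1))$ and $\pi_{B_{\beta}}^{-1}(e_{\beta}(W_2))$ to be disjoint. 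Passing to a common level $\gamma\ge\alpha,\beta$, this forces the operator at level $\gamma$ to dominate the pullbacks of \emph{both} $e_\alpha$ and $e_\beta$ simultaneously, for all pairs of lower levels at once; a generic family of level operators has no such compatibility, and openness of the bonding maps by itself does not create it. Moreover, the index set of a sigma-spectrum is directed but not well ordered, so ``extending along the spectrum and taking unions at limit stages'' is not a recursion you can literally run; you would have to well order the indices and then maintain coherence with a non-directed initial segment, which is exactly where the construction breaks down.

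The paper's proof is structured so that no coherence between levels is ever needed, and that is the idea missing from your sketch. It proceeds by transfinite induction on the weight $\tau$, representing $X$ as the limit of a \emph{well-ordered} continuous spectrum $\{X_\alpha,p^{\alpha}_{\beta},\beta<\alpha<\omega(\tau)\}$ with open projections and openly generated (not metrizable) terms of weight $<\tau$. The key tool is Shchepin's theorem \cite{sc2} that such an $X$ has a base $\mathcal B$ of open sets $U$ of \emph{finite rank} $d(U)$: each basic $U$ yields a box $\gamma_1(U)=\prod\{p_\alpha(U):\alpha\in A(U)\}\times\prod\{X_\alpha:\alpha\notin A(U)\}$ in $\prod X_\alpha$, and for disjoint $W_1\supset\overline{U_1}$, $W_2\supset\overline{U_2}$ the disjointness of $\gamma_1(U_1)$ and $\gamma_1(U_2)$ is witnessed at the \emph{single} coordinate $\beta=\max\{A(U_1)\cap A(U_2)\}$, using \cite[Lemma 3, section 5]{sc2}. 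This regularly embeds $X$ in $\prod X_\alpha$. The inductive hypothesis then gives regular operators $e_\alpha$ for $X_\alpha\subset\I^{A(\alpha)}$, which are used only \emph{factor-wise}: disjoint open boxes in a product have disjoint factors at some common coordinate, so the product operator $\theta_1(V)=\prod e_\alpha(U_\alpha)\times\prod\I^{A(\alpha)}$ is automatically regular without any relation among the $e_\alpha$'s. Composing the two regular embeddings and restricting to $\I^A$ finishes the proof. In short, the finite-rank base plus the product trick replaces the coherence you would need, and without that device (or an equivalent one) your gluing step does not go through.
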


\begin{proof}
It suffices to show that every embedding of $X$ in a Tychonoff cube
$\I^A$ is regular, where $|A|=w(X)$. This is true if $X$ is
metrizable, see \cite[\S21, XI, Theorem 2]{k}. Suppose the statement
holds for any openly generated compactum of weight $<\tau$, and let
$X\subset\I^A$ be an openly generated compactum of weight
$|A|=\tau$. Then $X$ is the limit space of a well ordered inverse
system $S=\{X_\alpha, p^\alpha_\beta, \beta<\alpha<\omega(\tau)\}$
such that all projections $p_\alpha\colon X\to X_\alpha$ are open
surjections and all $X_\alpha$ are  openly generated compacta of
weight $<\tau$, see \cite{sc3}. Here, $\omega(\tau)$ is the first
ordinal of cardinality $\tau$. For any $B\subset A$ let $\pi_B$ be
the projection from $\I^A$ to $\I^B$, and
$p_B=\pi_B|X:X\to\pi_B(X)$. According to Shchepin's spectral theorem
\cite{sc1}, we can assume that there exists an increasing
transfinite sequence $\{A(\alpha):\alpha<\omega(\tau)\}$ of subsets
of $A$ such that $A=\bigcup\{A(\alpha):\alpha<\omega(\tau)\}$,
$X_\alpha=p_{A(\alpha)}(X)$ and $p_\alpha=p_{A(\alpha)}$. By
\cite{sc2}, $X$ has a base $\mathcal B$ consisting of open sets
$U\subset X$  with finite rank $d(U)$. Here,
$$d(U)=\{\alpha:
p_{\alpha+1}(U)\neq\big(p^{\alpha+1}_\alpha\big)^{-1}(p_\alpha(U))\}.$$
For every $U\in\mathcal B$ let $A(U)=\{\alpha_0, \alpha,
\alpha+1:\alpha\in d(U)\}$, where $\alpha_0\in A$ is fixed.
Obviously, $X$ is a subset of
$\prod\{X_\alpha:\alpha<\omega(\tau)\}$. For every $U\in\mathcal B$
we consider the open set
$\gamma_1(U)\subset\prod\{X_\alpha:\alpha<\omega(\tau)\}$ defined by
$$\gamma_1(U)=\prod\{p_\alpha(U):\alpha\in
A(U)\}\times\prod\{X_\alpha:\alpha\in A\backslash A(U)\}.$$ Now, let
$$\gamma (W)=\bigcup\{\gamma_1(U):U\in\mathcal
B\hbox{~}\mbox{and}\hbox{~}\overline{U}\subset W\}, W\in\mathcal
T_X.$$

{\em Claim. The following conditions hold: $(i)$ $\gamma
(W_1)\cap\gamma (W_2)=\varnothing$ whenever $W_1$ and $W_2$ are
disjoint open sets in $X$; $(ii)$ $\gamma (W)\cap X=W$,
$W\in\mathcal T_X$.}

\smallskip
Suppose $W_1\cap W_2=\varnothing$. To prove condition (i), it
suffices to show that $\gamma_1(U_1)\cap\gamma_1(U_2)=\varnothing$
for any pair $U_1, U_2\in\mathcal B$ with $\overline{U_i}\subset
W_i$, $i=1,2$. Indeed, we fix such a pair and let
$\beta=\max\{A(U_1)\cap A(U_2)\}$. Then
 $\beta$ is either $\alpha_0$ or $\max\{d(U_1)\cap d(U_2)\}+1$. In both
 cases $d(U_1)\cap d(U_2)\cap [\beta,\omega(\tau))=\varnothing$.
 According to \cite[Lemma 3, section 5]{sc2}, $p_\beta(U_1)\cap
 p_\beta(U_2)=\varnothing$.
 Since $\beta\in A(U_1)\cap A(U_2)$,
 $\gamma_1(U_1)\cap\gamma_1(U_2)=\varnothing$.

Obviously, $W\subset\gamma (W)\cap X$ for all $W\in\mathcal T_X$.
So, condition $(ii)$ will follow as soon as we prove that $\gamma
(W)\cap X\subset W$. Fix $x\in\gamma(W)\cap X$ and let $U\in\mathcal
B$ such that $x\in\gamma_1(U)$ and $\overline{U}\subset W$. Define
$\beta(U)=\max d(U)+1$. Then
 $p_\alpha(x)\in p_\alpha(U)$ for all $\alpha\leq\beta(U)$.
 Since $\alpha\not\in d(U)$ for all $\alpha\geq\beta(U)$, we
have $\big(p^\alpha_{\beta(U)}\big)^{-1}(p_{\beta(U)}(x))\subset
p_\alpha(U)$ for $\alpha>\beta(U)$. Hence, $p_\alpha(x)\in
p_\alpha(U)$ for all $\alpha$. The last relation implies that
$x\in\overline{U}$, so $x\in W$. This completes the proof of the
claim.

We are going to show that $\prod\{X_\alpha:\alpha\in A\}$ is
regularly embedded in $\prod\{\I^{A(\alpha)}:\alpha\in A\}$.
According to our assumption, each $X_\alpha$ is regularly embedded
in $\I^{A(\alpha)}$, so there exists a regular operator
$e_\alpha:\mathcal T_{X_\alpha}\to\mathcal T_{\I^{A(\alpha)}}$. Let
$\mathcal B_1$ be a base for $\prod\{X_\alpha:\alpha\in A\}$
consisting of sets of the form $V=\prod\{U_\alpha:\alpha\in
\Gamma(V)\}\times\prod\{X_\alpha:\alpha\in A\backslash\Gamma(V)\}$
with $\Gamma(V)\subset A$ being a finite set. For every
$V\in\mathcal B_1$ we assign the open set
$\theta_1(V)\subset\prod\{\I^{A(\alpha)}:\alpha\in A\}$,
$$\theta_1(V)=\prod\{e_\alpha\big(U_\alpha\big):\alpha\in
\Gamma(V)\}\times\prod\{\I^{A(\alpha)}:\alpha\in A\backslash
\Gamma(V)\}.$$  Now, we define a regular operator $\theta$ between
the topologies of $\prod\{X_\alpha:\alpha\in A\}$ and
$\prod\{\I^{A(\alpha)}:\alpha\in A\}$ by
$$\theta(G)=\bigcup\{\theta_1(V):V\in\mathcal
B_1\hbox{~}\mbox{and}\hbox{~}V\subset G\}.$$ Since, according to
Claim 1, $X$ is regularly embedded in $\prod\{X_\alpha:\alpha\in
A\}$ and $\gamma$ is the corresponding regular operator, the
equality $e_1(W)=\theta(\gamma(W))$, $W\in\mathcal T_X$, provides a
regular operator between the topologies of $X$ and
$\prod\{\I^{A(\alpha)}:\alpha\in A\}$. Finally, because
$X\subset\I^A\subset\prod\{\I^{A(\alpha)}:\alpha\in A\}$, we define
$e\colon\mathcal T_X\to\mathcal T_{\I^A}$,
$$e(W)=\bigcup\{e_1(W)\cap\I^A:W\in\mathcal T_X\}.$$
So, $X$ is regularly embedded in $\I^A$.
\end{proof}

\begin{pro}
Let $X$ be an openly generated compact space possessing a binary
closed subbase $\mathcal S$. Then $X$ is a Dugundji space. Moreover,
if in addition $X$ is connected and $\mathcal S$ is normal, then $X$
is an absolute retract.
\end{pro}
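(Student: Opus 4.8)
The plan is to read the Dugundji property off Pelczynski's criterion and to run an induction on weight parallel to the proof of Lemma 3.1. Recall that a compactum $X$ is a Dugundji space as soon as its embedding into the Tychonoff cube $\I^A$, $|A|=w(X)$, admits a \emph{regular linear extender}, i.e.\ a positive linear operator $u\colon C(X)\to C(\I^A)$ with $u(1)=1$ and $u(f)|X=f$. Indeed, if $X\subset Y$ is any embedding, extend the inclusion $X\hookrightarrow\I^A$ to a map $g\colon Y\to\I^A$ (using that $\I^A$ is an absolute extensor) and set $v(f)=u(f)\circ g$; this is a regular linear extender $C(X)\to C(Y)$. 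So the whole problem reduces to building $u$ for the cube embedding. The base case $w(X)\le\w$ is the classical Dugundji extension theorem, which already furnishes a regular linear extender whenever the domain is metrizable, so the binary subbase is not needed there.

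For the inductive step assume the assertion for all weights $<\tau$ and let $w(X)=\tau$. As in Lemma 3.1 I would represent $X$ as the limit of a well ordered continuous inverse system $\{X_\alpha,p^\alpha_\beta,\ \beta<\alpha<\w(\tau)\}$ with open surjective projections $p_\alpha\colon X\to X_\alpha$ onto openly generated compacta $X_\alpha$ of weight $<\tau$. Using Shchepin's spectral theorem I would first arrange that the subbase is \emph{compatible} with the spectrum: each member of $\mathcal S$ is $p_\alpha$-saturated from some level on, so that the images $\mathcal S_\alpha=\{p_\alpha(S):S\in\mathcal S\}$ form a binary closed subbase on every $X_\alpha$ and $p^{\alpha+1}_\alpha$ carries $\mathcal S_{\alpha+1}$ onto $\mathcal S_\alpha$. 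By the inductive hypothesis each $X_\alpha$ is then a Dugundji space.

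It remains to pass from the factors to the limit, and this is where I expect the real difficulty. The bonding maps $p^{\alpha+1}_\alpha$ are open, but open surjections between Dugundji compacta need not be Dugundji maps — this is exactly the gap between openly generated and Dugundji spaces, and it is what the binary subbase must close. The key step is therefore to use binarity of $\mathcal S_{\alpha+1}$ together with the openness of $p^{\alpha+1}_\alpha$ to manufacture a \emph{regular averaging operator} for the bonding map, i.e.\ a positive linear operator $C(X_{\alpha+1})\to C(X_\alpha)$ preserving constants and splitting the isometric embedding $f\mapsto f\circ p^{\alpha+1}_\alpha$. Binarity lets one separate disjoint subbase members in a coherent, coordinate-wise fashion and so define the averaging formula, much as a partition of unity drives the metrizable Dugundji theorem. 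Once each bonding map is a Dugundji map, Haydon's spectral characterization of Dugundji spaces (metrizable factors plus Dugundji bonding maps) yields that the limit $X$ is Dugundji; alternatively one assembles the global extender $u\colon C(X)\to C(\I^A)$ directly by transfinite recursion, extending across the new coordinates at each successor step with the averaging operator just built and taking unions at limit ordinals.

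For the second statement assume in addition that $X$ is connected and that $\mathcal S$ is normal, so that $X$ is normally supercompact. Connectedness and normality of the subbase are inherited by the factors above: each $X_\alpha$ is a continuous image of $X$, hence connected, and $\mathcal S_\alpha$ is again a binary \emph{normal} subbase, so every $X_\alpha$ is a connected metrizable normally supercompact compactum. Such spaces are absolute retracts — this is the metrizable core of the statement, and I would either invoke it from van Mill's theory of normally supercompact spaces or prove it from the natural $\mathcal S_\alpha$-convexity, whose segments are connected continua and supply an equiconnecting structure. Finally I would lift the AR property through the spectrum: $X$ is a Dugundji space all of whose defining metric factors are absolute retracts, and since the obstruction to being an absolute extensor, rather than merely an $\mathrm{AE}(0)$, is carried by these metric factors, $X$ is itself an absolute retract. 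The main obstacle here is again the interface between levels, namely ensuring that the equiconnecting (convex) structure is compatible with the bonding maps so that the AR property survives the inverse limit.
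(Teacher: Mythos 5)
Your reduction to the cube embedding and the metrizable base case are fine, but both key steps of your induction are left as hopes, and they are exactly where the content of the proposition lies. (a) The claimed compatibility of $\mathcal S$ with the spectrum does not follow from Shchepin's spectral theorem: that theorem controls the maps of the spectrum, not an arbitrary family of closed sets, and $\mathcal S$ may have cardinality far larger than $\tau$ (up to $2^\tau$). Each single $S\in\mathcal S$ is $p_\alpha$-saturated only on a closed unbounded set of levels $\alpha<\omega(\tau)$, and the club filter is only $<\mathrm{cf}(\tau)$-complete, so there need be no level at which all members of $\mathcal S$ are saturated; and without saturation the images $p_\alpha(S)$ need not form a binary family (linkedness of $\{p_\alpha(S_i)\}$ does not give linkedness of $\{S_i\}$, so binarity of $\mathcal S$ cannot be applied). (b) More seriously, the ``regular averaging operator for the bonding maps manufactured from binarity'' is precisely the heart of the theorem, and you give no construction, only an analogy with partitions of unity. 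For Haydon's spectral characterization you need the bonding maps to be $0$-soft (equivalently, to admit compatible averaging operators); openness of the bonding maps plus Dugundji factors is not sufficient --- as you yourself note, this is exactly the gap between openly generated and Dugundji spaces. So the step you leave open is not a technical interface problem: it is equivalent in difficulty to the proposition itself, and the induction is in effect circular. The same criticism applies to the AR part, where ``lifting the AR property through the spectrum'' would require soft bonding maps or a spectrum-compatible convexity that you do not produce (inverse limits of ARs, even with open bonding maps, need not be ARs).

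For contrast, the paper closes the gap with no spectral induction at all beyond Lemma 3.1. That lemma supplies a regular operator $e\colon\mathcal T_X\to\mathcal T_{\I^\tau}$, and one defines a set-valued map $r\colon\I^\tau\to X$ by $r(y)=\bigcap\{I_{\mathcal S}(\overline U):y\in e(U)\}$, with $r(y)=X$ when $y$ lies in no $e(U)$, where $I_{\mathcal S}(\overline U)=\bigcap\{S\in\mathcal S:\overline U\subset S\}$. Regularity of $e$ makes $\{U:y\in e(U)\}$ linked, hence the family of all $S\in\mathcal S$ containing some $\overline U$ with $y\in e(U)$ is linked, and binarity of $\mathcal S$ --- used exactly once, here --- gives $r(y)\neq\varnothing$; upper semicontinuity is immediate from the definition. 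Thus $r$ is an usco retraction of $\I^\tau$ onto $X$, and Dranishnikov's theorem on multivalued retracts yields that $X$ is Dugundji. When $X$ is connected and $\mathcal S$ is normal, each value $r(y)$ is $\mathcal S$-convex, hence connected; then results of Dranishnikov and Fedorchuk give a single-valued map $r_1\colon\I^\tau\to\exp X$ with $r_1(x)=\{x\}$ for $x\in X$, and van Mill's retraction $r_2\colon\exp X\to X$, available for normally supercompact spaces, makes $r_2\circ r_1$ a genuine retraction, so $X\in AR$. If you wish to salvage your route, you would first have to prove that the subbase can be pushed down a spectrum and that the resulting bonding maps admit averaging operators; both assertions are unproven and at least as hard as the statement you are trying to establish.
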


\begin{proof}
Suppose $X$ is an openly generated compactum with a binary closed
subbase $\mathcal S$ and $X$ is embedded in $\I^\tau$ for some
$\tau$. By Lemma 3.1, $X$ is regularly embedded in $\I^\tau$, so
there exists a regular operator $e\colon\mathcal T_X\to\mathcal
T_{\I^\tau}$. Define the set-valued map $r\colon\I^\tau\to X$ by
$$r(y)=\bigcap\{I_{\mathcal S}(\overline{U}):y\in e(U), U\in\mathcal
T_{X}\}\hbox{~}\mbox{if}\hbox{~} y\in\bigcup\{e(U): U\in\mathcal
T_{X}\}\leqno{(2)}$$
$$\hbox{~}\mbox{and}\hbox{~} r(y)=X
\hbox{~}\mbox{otherwise},\hbox{~}$$ where $\overline U$ is the
closure of $U$ in $X$ and $I_{\mathcal S}(\overline
U)=\bigcap\{S\in\mathcal S:\overline U\subset S\}$. Since $e$ is
regular, for every $y\in\I^\tau$ the system $\gamma_y=\{U\in\mathcal
T_{X} :y\in e(U)\}$ is linked. Consequently,
$\omega_y=\{S\in\mathcal S:\overline{U}\subset S\hbox{~}\mbox{for
some}\hbox{~}U\in\gamma_y\}$ is also linked, so
$r(y)=\bigcap\{S:S\in\omega_y\}\neq\varnothing$.

It is easily seen that $r(x)=x$ for all $x\in X$. Moreover, $r$ is
usc. Indeed, let $r(y)\subset W$ with $y\in\I^\tau$ and
$W\in\mathcal T_X$. Then there exist finitely many $U_i\in\mathcal
T_{X}$, $i=1,2,..,k$, such that $y\in \bigcap_{i=1}^{i=k}e(U_i)$ and
$\bigcap_{i=1}^{i=k}I_{\mathcal S}(\overline{U}_i)\subset W$.
Obviously, $r(y')\subset W$ for all
$y'\in\bigcap_{i=1}^{i=k}e(U_i)$. So, $r$ is an usco retraction from
$\I^\tau$ onto $X$. According to \cite{dr}, $X$ is a Dugundji space.

Suppose now $X$ is connected and $\mathcal S$ is a binary normal
subbase  for $X$. By \cite{vm}, any set of the form $I_{\mathcal
S}(F)$ is $\mathcal S$-convex (a set $A\subset X$ is $\mathcal
S$-convex if $I_{\mathcal S}(x,y)\subset A$ for all $x,y\in A$). So,
each $r(y)$ is an $\mathcal S$-convex set. According to
\cite[Corollary 1.5.8]{vm}, all closed $\mathcal S$-convex subsets
of $X$ are also connected. Hence, the map $r$, defined by $(2)$, is
connected-valued. Consequently, by \cite{dr}, $X$ is an absolute
extensor in dimension 1, and there exists a map $r_1:\I^\tau \to
\exp X$ with $r_1(x)=\{x\}$ for all $x\in X$, see \cite[Theorem
3.2]{f}. Here, $\exp X$ is the space of all closed subsets of $X$
with the Vietoris topology. On the other hand, since $X$ is normally
supercompact, there exists a retraction $r_2$ from $\exp X$ onto
$X$, see \cite[Corollary 1.5.20]{vm}. Then the composition $r_2\circ
r_1\colon\I^\tau\to X$ is a (single-valued) retraction. So, $X\in
AR$.
\end{proof}

\begin{cor}$($\cite{i1},\cite{i2}$)$
$\lambda X$ is a Dugundji space provided $X$ is an openly generated
compactum. If, in addition, $X$ is connected, then $\lambda X$ is an
absolute retract.
\end{cor}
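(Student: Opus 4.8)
The plan is to obtain the corollary as an immediate application of Proposition 3.2 to the superextension $\lambda X$ itself. Concretely, I would check that $\lambda X$ satisfies the hypotheses of that proposition: that it is openly generated and carries a binary closed subbase, and, for the second assertion, that it is connected with a subbase that can be taken normal. Once these are in place, both statements drop out.

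First I would dispose of the subbase condition, which costs nothing. As recorded in the introduction, $\lambda X$ is always normally supercompact; hence $\lambda X$ possesses a binary \emph{normal} closed subbase $\mathcal{S}$, which is in particular a binary closed subbase. This already supplies the second hypothesis of Proposition 3.2, and the normality of $\mathcal{S}$ is exactly what the ``absolute retract'' part of that proposition demands.

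The substantive point is that $\lambda X$ is openly generated whenever $X$ is. Here I would invoke Shchepin's representation: an openly generated $X$ is the limit of an inverse sigma-spectrum $\{X_\alpha, p^\alpha_\beta\}$ whose factors $X_\alpha$ are metrizable and whose bonding maps $p^\alpha_\beta$ are open surjections. Applying the superextension functor $\lambda$, which is normal and hence continuous and weight-preserving, I get that $\lambda X$ is the limit of the spectrum $\{\lambda X_\alpha, \lambda(p^\alpha_\beta)\}$, that each factor $\lambda X_\alpha$ is again metrizable (weight is preserved), and that each bonding map $\lambda(p^\alpha_\beta)$ is open. Thus $\lambda X$ is the limit of a sigma-spectrum with metrizable factors and open projections, i.e. it is openly generated. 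I expect this step to be the main obstacle: everything except the assertion that $\lambda$ sends open surjections to open surjections is formal, and it is precisely this preservation of openness under $\lambda$ that is the one genuinely nontrivial input (a classical fact, parallel to Theorem 2.4 established here for the functor $S$).

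With these facts secured the corollary follows. Since $\lambda X$ is openly generated and admits a binary closed subbase, Proposition 3.2 gives that $\lambda X$ is a Dugundji space. For the second statement, if $X$ is connected then so is $\lambda X$, and the subbase $\mathcal{S}$ is normal; applying the second part of Proposition 3.2 to $\lambda X$ then yields that $\lambda X$ is an absolute retract.
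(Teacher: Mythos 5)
Your proposal is correct and follows essentially the same route as the paper: the paper's proof also verifies the hypotheses of Proposition 3.2 for $\lambda X$ by noting that $\lambda$ is a continuous functor preserving open maps (citing Fedorchuk--Filippov), so that $\lambda X$ is openly generated, and that $\lambda X$ carries a binary normal subbase (with connectedness of $\lambda X$ for connected $X$ used implicitly, as in your write-up). Your version merely spells out the sigma-spectrum argument that the paper compresses into ``So, $\lambda X$ is openly generated.''
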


\begin{proof}
It is easily seen that $\lambda$ is a continuous functor preserving
open maps, see \cite{f1}. So, $\lambda X$ is openly generated. Since
$\lambda X$ has a binary normal subbase, Proposition 3.2 completes
the proof.
\end{proof}

A closed subset $\mathcal A$ of $\exp X$ is called an {\em inclusion
hyperspace} if for every $A\in\mathcal A$ and every $B\in\exp X$ the
inclusion $A\subset B$ implies $B\in\mathcal A$. The space of all
inclusion hyperspaces with the inherited topology from $\exp^2X$ is
closed in $\exp^2X$ and it is denoted by $GX$ \cite{m1}.

\begin{cor}\cite{m1}
$GX$ is a Dugundji space provided $X$ is an openly generated
compactum. If, in addition, $X$ is connected, then $GX$ is an
absolute retract.
\end{cor}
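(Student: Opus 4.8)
The plan is to follow the pattern of Corollary 3.4 verbatim, replacing the superextension functor $\lambda$ by the inclusion-hyperspace functor $G$. The argument then splits into three tasks: showing that $GX$ is openly generated, exhibiting a binary closed subbase for $GX$ (normal when we want the stronger conclusion), and invoking Proposition 3.2.

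For the first task I would check that $G$ is a continuous functor preserving open surjections; granting this, the same reasoning as in Corollary 3.4 shows that $GX$ is the limit of an inverse sigma-spectrum with open projections whenever $X$ is, i.e. $GX$ is openly generated. These functorial properties of $G$ are recorded in \cite{m1}, and may also be read off directly from the realization of $GX$ as a closed subspace of $\exp^2X$, using that $\exp$ is continuous and preserves open maps and that the inclusion-closedness condition defining $GX$ is compatible with the bonding maps.

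The second task is the crux. The natural candidate for an open subbase is, for each open $U\subseteq X$, the set $U^+=\{\mathcal A\in GX:A\subseteq U\hbox{ for some }A\in\mathcal A\}$, and I would aim to produce from these (or from membership sets attached to closed subsets of $X$) a binary closed subbase. The essential structural feature is that membership in an inclusion hyperspace is upward closed, which is what forces the relevant families to be linked; note also that $\lambda X\subseteq GX$, every maximal linked system being upward closed, so the subbase of $\lambda X$ is a guide. I expect the verification of binarity — and, for the second assertion, of normality of the subbase — to be the main obstacle, precisely because inclusion hyperspaces are genuinely more general than the maximal linked systems handled in Corollary 3.4: unlike in $\lambda X$, the complement $GX\setminus U^+$ need not reduce to a single membership set, so the combinatorics of the linking relation must be analyzed more carefully (this is the content of Moiseev's analysis in \cite{m1}).

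Once binarity of the subbase is established, Proposition 3.2 immediately gives that the openly generated compactum $GX$, carrying a binary closed subbase, is a Dugundji space. For the second assertion I would, assuming $X$ connected (whence $GX$ is connected as well), verify in addition that the subbase is normal, and again apply Proposition 3.2 to conclude $GX\in AR$. In this way the entire corollary is reduced to the two structural facts about $G$ and its subbase, both of exactly the same nature as those used for $\lambda$ in Corollary 3.4.
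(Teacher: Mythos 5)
Your argument for the first assertion is essentially the paper's own: $G$ is a continuous functor preserving open maps (the paper cites \cite{tz} for this), so $GX$ is openly generated whenever $X$ is, and $GX$ carries a binary closed subbase (the paper cites \cite{r0}, Radul's work on the inclusion-hyperspace monad, rather than Moiseev), so Proposition 3.2 yields that $GX$ is a Dugundji space. Up to the choice of references, this half of your proposal is correct and matches the paper.

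For the second assertion there is a genuine gap. You propose to verify that the binary subbase of $GX$ is \emph{normal} and then apply Proposition 3.2 once more; but you give no argument for normality, only the expectation that it can be checked, and this is exactly the point the paper deliberately avoids: its proof claims only that the subbase of $GX$ is binary, never that it is normal, and it is structured precisely so that normality is never needed. The paper's route for the AR statement is different. Since $GX$ is a connected Dugundji space, it is in particular connected and openly generated, so Corollary 3.3 applied to the compactum $GX$ itself gives that $\lambda(GX)$ is an absolute retract; then the natural inclusions $GX\subset\lambda(GX)\subset G^2X=G(GX)$, combined with the retraction of $G^2X$ onto $GX$ provided by \cite[Lemma 2]{r0} (the monad multiplication), exhibit $GX$ as a retract of the absolute retract $\lambda(GX)$, whence $GX\in AR$. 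Thus the stronger conclusion is obtained without ever producing a binary normal subbase on $GX$; your reduction replaces this known argument by an unproved, and possibly false, structural claim about $GX$, so as written the second half of your proof does not go through.
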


\begin{proof}
Since $G$ is a continuous functor preserving open maps \cite{tz},
$GX$ is openly generated if $X$ is so. On the other hand, $GX$ has a
binary subbase \cite{r0}. Hence, by Proposition 3.2, $GX$ is a
Dugundji space. Suppose $X$ is connected and openly generated. Then
$GX$ is a connected Dugundji space. Hence, $\lambda (GX)$ is an
absolute retract. Moreover, there exist natural inclusions
$GX\subset\lambda (GX)\subset G^2X=G(GX)$, and a retraction from
$G^2X$ onto $GX$, see \cite[Lemma 2]{r0}. Consequently, $GX\in AR$
as a retract of $\lambda (GX)$.
\end{proof}
\section{Proof of Theorem 1.1}

We begin this section with the following lemma.

\begin{lem}
Let $X$ be a compact regularly embedded subset of a space $Y$. Then
there exists an usco map $r\colon Y\to\lambda X$ such that
$r(x)=\eta_x$ for all $x\in X$.
\end{lem}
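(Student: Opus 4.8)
The plan is to imitate the construction of the usco retraction from the proof of Proposition 3.2, but to land in $\lambda X$ instead of in $X$. Let $e\colon\mathcal T_X\to\mathcal T_Y$ be the regular operator witnessing the regular embedding of $X$ in $Y$. For each $y\in Y$ I would put $\gamma_y=\{U\in\mathcal T_X:y\in e(U)\}$. Exactly as in Proposition 3.2, the disjointness-preserving property $e(U)\cap e(V)=\varnothing$ whenever $U\cap V=\varnothing$ shows that $\gamma_y$ is a linked family of open sets; consequently the closures $\{\overline U:U\in\gamma_y\}$ form a linked family of (nonempty) closed subsets of $X$, which extends to some maximal linked system. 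I would then define
$$r(y)=\{\eta\in\lambda X:\overline U\in\eta\ \text{for all}\ U\in\gamma_y\},$$
so that $r(y)\neq\varnothing$ by the preceding remark (and $r(y)=\lambda X$ when $\gamma_y=\varnothing$).

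The next step is to check that $r$ is compact-valued. For a closed set $A\subset X$ the maximality of a linked system gives $A\in\eta$ if and only if $A\cap F\neq\varnothing$ for every $F\in\eta$, so
$$\{\eta\in\lambda X:A\in\eta\}=\lambda X\setminus\{\eta:\exists\, F\in\eta,\ F\subset X\setminus A\}=\lambda X\setminus (X\setminus A)^+$$
is closed, being the complement of a subbasic open set $(X\setminus A)^+$. Hence each $r(y)=\bigcap_{U\in\gamma_y}\{\eta:\overline U\in\eta\}$ is an intersection of closed sets, thus closed in the compact space $\lambda X$, and therefore compact.

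It remains to identify $r$ on $X$ and to verify upper semicontinuity. For $x\in X$ the equality $e(U)\cap X=U$ forces $\gamma_x=\{U\in\mathcal T_X:x\in U\}$, the neighbourhood filter of $x$; since every closed $F$ with $x\notin F$ admits, by regularity of $X$, an open $U\ni x$ with $\overline U\cap F=\varnothing$, any $\eta\in r(x)$ can contain only closed sets through $x$, whence $\eta\subset\eta_x$ and, by maximality, $\eta=\eta_x$. As $\eta_x$ clearly lies in $r(x)$, this gives $r(x)=\{\eta_x\}$ (so $r$ is single-valued, equal to $\eta_x$, at each point of $X$). For upper semicontinuity, given open $W\subset\lambda X$ with $r(y)\subset W$, the compactness of $\lambda X\setminus W$ together with the finite intersection property yields finitely many $U_1,\dots,U_k\in\gamma_y$ with $\bigcap_{i=1}^k\{\eta:\overline{U_i}\in\eta\}\subset W$; then $O=\bigcap_{i=1}^k e(U_i)$ is a neighbourhood of $y$, and for $y'\in O$ one has $U_i\in\gamma_{y'}$ for every $i$, so $r(y')\subset\bigcap_{i=1}^k\{\eta:\overline{U_i}\in\eta\}\subset W$. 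I expect the main obstacle to be the point-value computation $r(x)=\{\eta_x\}$, where the maximality of linked systems and the regularity of $X$ must be combined; the closedness of $\{\eta:A\in\eta\}$ is the technical fact that makes the usco verification go through, the rest being a direct transcription of the argument for Proposition 3.2.
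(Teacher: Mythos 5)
Your proof is correct and follows essentially the same route as the paper: the paper defines $r(y)=\bigcap\{(\overline U)^+ : y\in e(U),\ U\in\mathcal T_X\}$, where $(\overline U)^+$ denotes $\{\eta\in\lambda X:\overline U\in\eta\}$ (exactly your $r(y)$), proves non-emptiness via linkedness of $\{U: y\in e(U)\}$, and establishes upper semicontinuity by the same finite-intersection argument as in Proposition 3.2. The details you work out explicitly --- the closedness of $\{\eta\in\lambda X: A\in\eta\}$ as the complement of the subbasic open set $(X\setminus A)^+$, and the point-value computation $r(x)=\{\eta_x\}$ --- are precisely the steps the paper leaves as ``easily seen.''
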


\begin{proof}
Let $e\colon\mathcal T_{X}\to\mathcal T_{Y}$ be a regular operator
between the topologies of $X$ and $Y$. Define $r\colon Y\to\lambda
X$ by
$$r(y)=\bigcap\{(\overline{U})^+:y\in e(U), U\in\mathcal
T_{X}\}\hbox{~}\mbox{if}\hbox{~} y\in\bigcup\{e(U): U\in\mathcal
T_{X}\},$$
$$\hbox{~}\mbox{and}\hbox{~} r(y)=\lambda X
\hbox{~}\mbox{otherwise}\hbox{~},$$ where $\overline U$ is the
closure of $U$ in $X$ and $(\overline{U})^+\subset\lambda X$
consists of all $\eta\in\lambda X$ with $\overline U\in\eta$. Since
$e$ is regular, for every $y\in Y$ the system $\{U\in\mathcal T_{X}
:y\in e(U)\}$ is linked. Consequently, $\{\overline{U}:y\in e(U)\}$
is also linked and consists of closed sets in $X$. Hence,
$\bigcap\{(\overline{U})^+:y\in e(U), U\in\mathcal
T_{X}\}\neq\varnothing$. So, $r$ has non-empty compact values. It is
easily seen that $r(x)=\eta_x$ for all $x\in X$. One can show (as in
the proof of Proposition 3.2) that $r$ is usc.
\end{proof}

{\em Proof of Theorem $1.1$} $(i)\Rightarrow (ii)$ We follow the
proof of \cite[Theorem 3]{sh}. The equality (1) provides a monotone
homogeneous and weakly additive extender $u_1\colon C(X)\to
C(\lambda X)$. Consider the space $Z$ obtained by attaching $\lambda
X$ to $Y$ at the points of $X$. Then $\lambda X$ is a closed subset
of $Z$ and, since $\lambda X$ is a Dugundji space (see Corollary
3.3), there exists a linear monotone extension operator $u_2\colon
C(\lambda X\to C(Z)$ with $u_2(1)=1$. Then $u(f)=u_2(u_1(f))|Y$,
$f\in C(X)$, defines a monotone homogeneous and weakly additive
extender $u\colon C(X)\to C(Y)$.

$(ii)\Rightarrow (iii)$ Suppose $X$ is a subset of $Y$ and $u\colon
C(X)\to C(Y)$ is a monotone homogeneous and weakly additive
extender. Define $r\colon Y\to S(X)$ by $r(y)(f)=u(f)(y)$, where
$y\in Y$ and $f\in C(X)$. It is easily seen that $r$ is continuous
and $r(x)=\delta_x$ for all $x\in X$.

$(iii)\Rightarrow (iv)$ Let $r_2\colon Y\to S(X)$ be a continuous
map with $r_2(x)=\delta_x$ for all $x\in X$. The equality
$u(f)(\varphi)=\varphi(f)$ defines a monotone extender  $u\colon
C(X)\to C(S(X))$. Then, by \cite[Theorem 4.1]{vd}, $X$ is regularly
embedded in $S(X)$. So, according to Lemma 4.1, there exists an usco
map $r_1\colon S(X)\to\lambda X$ such that $r_1(\delta_x)=\eta_x$,
$x\in X$. Then $r=r_1\circ r_2\colon Y\to\lambda X$ is the required
map.

$(iv)\Rightarrow (i)$ Let $X$ be embedded in $\I^A$ for some $A$ and
$r\colon\I^A\to\lambda X$ be an usco map with $r(x)=\eta_x$, $x\in
X$. For every open $U\subset X$ define $e(U)=\{y\in\I^A:r(y)\subset
U^+\}$. Since $U^+$ is open in $\lambda X$ and $r$ is usc, $e(U)$ is
open in $\I^A$. Moreover, $e(U)\cap X=U$ and $e(U)\cap
e(V)=\varnothing$ provided $U$ and $V$ are disjoint. So, $X$ is
regularly embedded in $\I^A$. Finally, by \cite{s}, $X$ is
$\varkappa$-metrizable.\hfill$\square$

Now, we consider the functor $O$ of order preserving functionals
introduced by Radul \cite{r1}. Recall that a map $\varphi\colon
C(X)\to\mathbb R$ is called order preserving if it is monotone,
weakly additive and $\varphi(1_X)=1$. Theorem 4.2 below follows from
Theorem 1.1. Indeed, since $S$ is a subfunctor of $O$ we have
embeddings $X\subset S(X)\subset O(X)$ for every compactum $X$. This
implies implications $(i)\Rightarrow (ii)$ and $(ii)\Rightarrow
(iii)$ of Theorem 4.2. The proofs of implications $(iii)\Rightarrow
(vi)$ and $(vi)\Rightarrow (i)$ are the same.

\begin{thm}
For a compact space $X$ the following conditions are equivalent:
\begin{itemize}
\item[(i)] $X$ is $\varkappa$-metrizable;
\item[(ii)] For any embedding of $X$ in a compactum $Y$ there exists
a monotone and weakly additive extender $u\colon C(X)\to C(Y)$ such
that $u(1_X)=1_Y$;
\item[(iii)] For any embedding of $X$ in a compactum $Y$ there exists
a continuous map $r:Y\to O(X)$ such that $r(x)=\delta_x$ for all
$x\in X$.
\item[(iv)] For any embedding of $X$ in a compactum $Y$ there exists
an usco map $r:Y\to\lambda X$ such that $r(x)=\eta_x$ for all $x\in
X$;
\end{itemize}
\end{thm}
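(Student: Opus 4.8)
The plan is to deduce Theorem 4.2 directly from Theorem 1.1, exploiting that $S$ is a subfunctor of $O$, so that for every compactum $X$ we have the chain of natural inclusions $X\subset S(X)\subset O(X)$. Since the equivalences $(i)\Leftrightarrow(ii)\Leftrightarrow(iii)\Leftrightarrow(iv)$ of Theorem 1.1 are already available, most implications of Theorem 4.2 will follow by transport along these inclusions, and the notes preceding the statement indicate precisely which implications require attention: $(i)\Rightarrow(ii)$ and $(ii)\Rightarrow(iii)$ use the embeddings, while $(iii)\Rightarrow(iv)$ and $(iv)\Rightarrow(i)$ are verbatim the same as in Theorem 1.1. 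I would organize the proof as a cycle $(i)\Rightarrow(ii)\Rightarrow(iii)\Rightarrow(iv)\Rightarrow(i)$.

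\smallskip

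For $(i)\Rightarrow(ii)$, I would start from the monotone, homogeneous and weakly additive extender $u\colon C(X)\to C(Y)$ provided by Theorem 1.1\,(ii). Any such $u$ is in particular monotone, weakly additive, and satisfies $u(1_X)=1_Y$ (recall the remark in the introduction that properties (i)--(iii) force $u(1)=1$), so $u$ already witnesses condition (ii) of Theorem 4.2, which drops only the homogeneity requirement. For $(ii)\Rightarrow(iii)$, given a monotone weakly additive extender $u$ with $u(1_X)=1_Y$, I would define $r\colon Y\to O(X)$ by $r(y)(f)=u(f)(y)$, exactly as in the proof of $(ii)\Rightarrow(iii)$ in Theorem 1.1; the only point to check is that each functional $r(y)$ is order preserving, i.e. monotone, weakly additive and normalized by $r(y)(1_X)=u(1_X)(y)=1$, which is immediate from the hypotheses on $u$. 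Continuity of $r$ and the equality $r(x)=\delta_x$ for $x\in X$ are verified as before.

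\smallskip

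The implications $(iii)\Rightarrow(iv)$ and $(iv)\Rightarrow(i)$ carry over without change. For $(iii)\Rightarrow(iv)$ one takes the continuous map $r_2\colon Y\to O(X)$, uses the extender $u(f)(\varphi)=\varphi(f)$ together with \cite[Theorem 4.1]{vd} to conclude that $X$ is regularly embedded in $O(X)$, and then applies Lemma 4.1 to obtain an usco map $r_1\colon O(X)\to\lambda X$ with $r_1(\delta_x)=\eta_x$; the composition $r_1\circ r_2$ is the desired usco map into $\lambda X$. For $(iv)\Rightarrow(i)$ the argument is identical to that in Theorem 1.1: from the usco map $r\colon\I^A\to\lambda X$ one builds a regular operator $e(U)=\{y:r(y)\subset U^+\}$, concludes that $X$ is regularly embedded in $\I^A$, and invokes Shirokov's result \cite{s} to obtain $\varkappa$-metrizability.

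\smallskip

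The main obstacle, such as it is, lies in $(iii)\Rightarrow(iv)$: one must be sure that \cite[Theorem 4.1]{vd}, applied here to the larger functor $O$ rather than to $S$, indeed yields regular embedding of $X$ in $O(X)$ from the existence of the monotone extender $u\colon C(X)\to C(O(X))$. Since that result is stated for monotone extenders in general and $O(X)$ is itself a compact convex subset of $\IR^{C(X)}$ containing the image of $X$, the same reasoning applies, and Lemma 4.1 depends only on regular embedding, not on which functor produced it. Thus no genuinely new estimate is needed, and the proof reduces to recording these observations.
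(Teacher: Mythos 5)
Your proposal is correct and follows essentially the same route as the paper: the author likewise deduces Theorem 4.2 from Theorem 1.1, using the subfunctor inclusions $X\subset S(X)\subset O(X)$ for $(i)\Rightarrow(ii)$ and $(ii)\Rightarrow(iii)$, and repeating the proofs of $(iii)\Rightarrow(iv)$ and $(iv)\Rightarrow(i)$ verbatim with $O$ in place of $S$. Your write-up merely makes explicit the details the paper leaves implicit (e.g.\ that homogeneity plus weak additivity forces $u(1_X)=1_Y$, and that van Douwen's theorem and Lemma 4.1 apply equally well to $O(X)$), which is exactly what the paper intends.
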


Let $F$ be a covariant functor in the category of compact spaces and
continuous maps. A compactum $X$ is said to be $F$-injective
\cite{sc3} if for any map $f\colon Y\to X$ and any embedding
$i\colon Y\to Z$ there exists a map $g\colon F(Z)\to F(X)$ such that
$F(f)=g\circ F(i)$. It is easily seen that if $F$ has the property
that for every $X$ there exists an embedding $j_X\colon X\to F(X)$,
then $X$ is $F$-injective iff for every embedding of $X$ in another
compactum $Y$ there exists a map $h\colon Y\to F(X)$ with
$h(x)=j_X(x)$ for all $x\in X$. It follows from Theorem 1.1(iii)
that $\varkappa$-metrizable compacta are exactly the $S$-injective
compacta.

\textbf{Acknowledgments.} The author would like to express his
gratitude to the referee for several valuable remarks and
suggestions which completely changed the initial version of the
paper. The author is also indebted to Taras Radul for pointing out
that $S$ is a subfunctor of the functor $O$, and for providing his
papers treating the properties of the functor $O$.


\end{document}